\theoremstyle{plain}
\newtheorem{theorem}{Theorem}[section]
\newtheorem{proposition}[theorem]{Proposition}
\newtheorem{lemma}[theorem]{Lemma}
\newtheorem{corollary}[theorem]{Corollary}
\theoremstyle{definition}
\newtheorem{definition}[theorem]{Definition}
\theoremstyle{remark}
\newtheorem{remark}[theorem]{Remark}
\newtheorem{example}[theorem]{Example}
\renewcommand{\lim}{\mathrm{lim}}
\newcommand{\Aut}{\mathrm{Aut}}
\newcommand{\lra}{\longrightarrow}
\numberwithin{equation}{section}
\author{ Dinh Van Hoang}
\address[Dinh Van Hoang]{Faculty of Advanced Education, Ho Chi Minh City University of Technology and Engineering, Vietnam }
\email{hoangdv@hcmute.edu.vn}
\author{Phan Thanh Toan}
\address[Phan Thanh Toan]{Analytical and Algebraic Methods in Optimization Research Group, Faculty of Mathematics and Statistics, Ton Duc Thang University, Ho Chi Minh City, Vietnam}
\email{phanthanhtoan@tdtu.edu.vn}
\begin{document}
%\begin{frontmatter}

\title{Noether's normalization in iterated skew polynomial rings}
%\title{Noether's normalization in skew polynomial rings}
%\tnotetext[mytitlenote]{This research is funded by xxxxxxxxx.}
%\date \today

\begin{abstract}
The classical Noether Normalization Lemma  states that if $S$ is a finitely generated algebra over a field $k$, then there exist elements $x_1,\dots,x_n$ which are algebraically independent over $k$ such that $S$ is a finite module over $k[x_1,\dots,x_n]$. This lemma has been studied intensively in different flavors. In 2024, Elad Paran and Thieu N. Vo successfully generalized this lemma for the case when $S$ is a quotient ring of the skew polynomial ring $D[x_1,\dots,x_n;\sigma_1,\dots,\sigma_n]$. In this paper, we investigate this lemma in a more general setting when $S$ is a quotient ring of an iterated skew polynomial ring $D[x_1;\sigma_1,\delta_1]\dots[x_n;\sigma_n,\delta_n]$. We extend several key results of Elad Paran and Thieu N. Vo  to this broader context and introduce a new version of Combinatorial Nullstellensatz over division rings.
\end{abstract}

%\end{frontmatter}

\maketitle

\section{Introduction}
The Noether Normalization Lemma is a fundamental result in algebraic geometry and commutative algebra. It provides a means of simplifying the structure of finitely generated algebras. More precisely, it asserts that for any finitely generated algebra $S$ over a field $k$, one can find algebraically independent elements $x_1,\dots,x_n$ over $k$ such that $S$ is a finite module over $k[x_1,\dots, x_n]$. Moreover, it is shown that $n$ is also the Krull dimension of $S$. 
This lemma plays a crucial role in dimension theory and has far-reaching applications across algebraic geometry, invariant theory, and commutative algebra, for example, see \cite{Ketan, Gleb, Isabel}, etc..   

Extending such foundational results from algebraic geometry to the noncommutative setting has been a central theme in modern algebra. When we replace the ground field $k$ by a division ring $D$, there exist numerous methods to construct extensions of $D$ based on finitely many generators $x_1,\dots,x_n$. A natural question is to determine for which classes of such extensions an analogue of Noether normalization continues to hold. This question has been investigated in several recent works. In particular, Elad Paran and Thieu N. Vo \cite{Thieu1} introduced the notion {\em``automorphically normalizable"} and established that Noether-type normalization holds for skew polynomial rings of the form 
$S=D[x_1,\dots,x_n;\sigma, \ldots, \sigma]$. 

Motivated by these developments, the aim of this paper is to further explore Noether normalization in the setting of more general noncommutative algebras, namely iterated skew polynomial rings of the form $D[x_1,\dots,x_n;(\omega_1,\delta_1),\dots, (\omega_n,\delta_n)]$ in which $\omega_1,\dots,\omega_n\in \mathrm{Aut}(D)$ and $\delta_i$ is an $\omega_i$-derivation of $D$ for $i=1,\dots,n$.  
%In this paper, our main goal is to generalize this theorem to the case of iterated skew polynomial rings $D[x_1,\dots,x_n;(\omega_1,\delta_1),\dots, (\omega_n,\delta_n)]$ in which $\omega_1,\dots,\omega_n\in \mathrm{Aut}(D)$ and $\delta_i$ is an $\omega_i$-derivation over $D$ for $i=1,\dots,n$.
In the first step, we introduce the notion \textit{``skew  normalizable"}: %for a broader context:
\begin{definition}\label{def1}
	Let $D\subseteq S$ be a ring extension, where $D$ is a division ring.
	\begin{enumerate}
		\item $S$ is said to be  \textit{skew  normalizable} over $D$ if there is a subring $D\subseteq R\subseteq S$ such that $R$ is isomorphic to $D[x_1,\dots,x_n;(\omega_1,\delta_1),\dots, (\omega_n,\delta_n)]$  for some $\omega_1,\dots,\omega_n\in \mathrm{Aut}(D)$, $\delta_i$ is an $\omega_i$-derivation of $D$ for $i=1,\dots,n$, and $S$ is a left finite module over $R$.
		\item The tuple $(\omega_1,\delta_1;\dots;\omega_n,\delta_n)$ is called \textit{skew  normalizable} over $D$ if every quotient ring of $D[x_1,\dots,x_n;(\omega_1,\delta_1),\dots, (\omega_n,\delta_n)]$ is skew normalizable over $D$.
	\end{enumerate}
\end{definition}

Note that if we choose $\delta_1=\dots=\delta_n=0$, then our definition is reduced to the definition of ``automorphically normalizable" in \cite[Definition 1.2]{Thieu1}. 

In the next step, we study when the tuple $(\omega_1,\delta_1;\dots;\omega_n,\delta_n)$ is skew  normalizable. In the case that there is no derivative and $\omega_1=\dots=\omega_n$, Elad Paran and Thieu N. Vo obtained the important result \cite[Theorem 3.7]{Thieu1}, which is stated as follows.
\begin{theorem}{\label{Thieu_Elad_main}}
	Let $D$ be a division algebra and let $\sigma$ be an automorphism of $D$ such that the field
	$F = Z(D)\cap D_\sigma$  is infinite. Then any constant tuple $(\sigma, \dots , \sigma)$ is automorphically normalizable over $D$. Equivalently, every quotient ring of the form $D[x_1,\dots,x_n;\sigma,\dots,\sigma]/I$, where $I$ is a two-sided ideal, is automorphically normalizable.
\end{theorem}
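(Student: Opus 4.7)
The plan is to proceed by induction on the number of variables $n$. The base case $n = 1$ is immediate: if $I = 0$ take $\tau$ to be the identity; if $I \neq 0$, any nonzero $f \in I$ can be rescaled (using that $D$ is a division ring) so that it becomes monic in $x_1$, after which $D[x_1;\sigma]/(f)$ is a finite free left $D$-module.

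For the inductive step, let $S = R/I$ with $R = D[x_1,\ldots,x_n;\sigma,\ldots,\sigma]$ and $I \neq 0$. Choose a nonzero $f \in I$, write $f = \sum_\alpha a_\alpha x^\alpha$ with $a_\alpha \in D$, set $d = \max\{|\alpha| : a_\alpha \neq 0\}$, and let $f_d = \sum_{|\alpha| = d} a_\alpha x^\alpha$ be the top-degree part. The proposed change of variables is the linear substitution
\[
\tau(x_i) = x_i + c_i x_n \quad (i < n), \qquad \tau(x_n) = x_n,
\]
with $c_1,\ldots,c_{n-1} \in F$. The restriction $c_i \in F$ is exactly what is forced by the defining relations: preservation of $x_i a = \sigma(a) x_i$ for $a \in D$ requires $c_i \in Z(D)$, while preservation of $x_i x_j = x_j x_i$ requires $\sigma(c_i) = c_i$, together giving $c_i \in Z(D) \cap D_\sigma = F$. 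A binomial expansion (legitimate since $x_i$ and $c_i x_n$ commute when $c_i \in F$) shows that the coefficient of $x_n^d$ in $\tau(f)$ equals
\[
b := f_d(c_1,\ldots,c_{n-1},1) \in D.
\]

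The heart of the argument is to pick $(c_1,\ldots,c_{n-1}) \in F^{n-1}$ so that $b \neq 0$; this is the combinatorial Nullstellensatz over division rings announced in the paper, which reduces to the classical Nullstellensatz over the infinite field $F$. Fix an $F$-basis $(e_j)_j$ of $D$ and expand $a_\alpha = \sum_j a_{\alpha,j} e_j$ with $a_{\alpha,j} \in F$. Since $f_d$ is homogeneous of degree $d$ and nonzero, the polynomial $f_d(t_1,\ldots,t_{n-1},1) \in D[t_1,\ldots,t_{n-1}]$ is nonzero (its graded pieces in $t_1,\ldots,t_{n-1}$ have distinct degrees $0,1,\ldots,d$, so they cannot cancel). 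Hence some coordinate polynomial $g_j \in F[t_1,\ldots,t_{n-1}]$ is nonzero, and since $F$ is infinite $g_j$ has a nonvanishing point $(c_1,\ldots,c_{n-1}) \in F^{n-1}$; at this point the $e_j$-component of $b$ is nonzero, so $b \neq 0$.

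With such a choice, $b^{-1}\tau(f)$ is monic of degree $d$ in $x_n$ with coefficients in the sub-skew-polynomial-ring $R' := D[x_1,\ldots,x_{n-1};\sigma,\ldots,\sigma]$. Consequently, in the quotient $R/\tau(I)$ the image of $x_n$ satisfies a monic relation over the image of $R'$, so $R/\tau(I)$ is generated as a left $R'$-module by $1,x_n,\ldots,x_n^{d-1}$ and is therefore finite over $R'/J$ where $J := R' \cap \tau(I)$. Applying the inductive hypothesis to the $(n-1)$-variable quotient $R'/J$ yields an automorphism $\tau'$ of $R'$ that normalizes it; extending $\tau'$ by $\tau'(x_n) = x_n$ to all of $R$ and composing with $\tau$ produces the required normalizing automorphism of $R$. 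The main obstacle, and the novel ingredient beyond the commutative Noether Normalization, is the Nullstellensatz step above: controlling a polynomial with noncommutative coefficients via evaluation on the central invariant subfield $F$ is precisely where the hypothesis that $F$ be infinite enters essentially, and it is the piece that must be established before the induction can proceed.
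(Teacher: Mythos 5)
Your proposal is correct and follows the same architecture as the paper's own argument (which appears there as the special case $\delta_1=\cdots=\delta_n=0$ of Theorem \ref{main1}, via Theorem \ref{monic}): a linear substitution $x_i\mapsto x_i+c_ix_n$ with the $c_i$ forced into the central invariant subfield $F=Z(D)\cap D_\sigma$, monicization of a relation in $x_n$ by arranging that the top-degree form does not vanish at $(c_1,\dots,c_{n-1},1)$, and induction on the number of variables combined with transitivity of normalizability. The one place you genuinely diverge is the nonvanishing step. The paper invokes its Combinatorial Nullstellensatz over division rings (Theorem \ref{ComNulstellensatz}, proved from the Gordon--Motzkin bound on conjugacy classes containing roots), specialized to central evaluation sets in Corollary \ref{ComNulstellensatz2}; you instead expand the coefficients in an $F$-basis of $D$, write $f_d(c_1,\dots,c_{n-1},1)=\sum_j g_j(c)\,e_j$ with $g_j\in F[t_1,\dots,t_{n-1}]$, and reduce to the fact that a nonzero polynomial over the infinite field $F$ has a nonvanishing point. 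Because your evaluation points are central and $\sigma$-fixed, this reduction is legitimate and is in fact more elementary than the paper's route; what you give up is the extra generality of Theorem \ref{ComNulstellensatz} (evaluation at pairwise non-conjugate, not necessarily central, elements), which the present theorem does not need. Two minor presentational points: the inductive hypothesis as stated yields that $R'/J$ is automorphically normalizable, i.e.\ contains a normalizing skew polynomial subring, not literally an automorphism of $R'$, so the clean way to conclude is the transitivity statement (Lemma \ref{transivity}) rather than composing automorphisms (though in this construction the normalizing subring does happen to be generated by linear combinations of the variables); and the degenerate cases $I=0$ and $I\cap D\neq 0$ (whence $I=R$) should be dismissed explicitly, both being immediate.
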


The main result established in our paper is a generalization of Theorem  \ref{Thieu_Elad_main}. Our result is stated as follows. 
\begin{theorem}
	Let $D$ be a division ring, $\omega$ be an automophism of $D$,  and $\delta_1,\dots,\delta_n$ be $\omega$-derivations of $D$ such that $\omega\circ\delta_i=\delta_i\circ\omega$ and $\delta_i\circ\delta_j=\delta_j\circ\delta_i$ for all $1\le i,j\le n$. Assume that $F=Z(D)\cap D_{\omega}\cap \mathrm{Ker}(\delta_1)\cap\cdots\cap \mathrm{Ker}(\delta_n)$ is an infinite set. Then the tuple $(\omega,\delta_1;\dots;\omega,\delta_n)$ is skew normalizable.
\end{theorem}

One of the key steps in the proof of our theorem relies
on the idea of the Combinatorial Nullstelensatz given by Noga Alon in \cite{NAlon1} (another proof is given by Michalek in \cite{Michalek}), which was generalized to the case of division rings by Elad Paran in \cite{Paran1} as follows.

\begin{theorem}[Combinatorial Nullstellensatz for division rings II]
	Let $D$ be a division ring and $p\in D[x_1,\dots,x_n]$ be a polynomial of degree $\sum_{i=1}^nk_i$, in which the coefficient of $x_1^{k_1}\dots x_n^{k_n}$ is non-zero. Let $A_1,\dots,A_n$ be algebraic subsets of $D$ such that $A_1\times\dots \times A_n\subset D_c^n$ and $|A_i|>k_i$ for $i=1,\dots,n$. Then, there is a point in $A_1\times\dots\times A_n$ at which $p$ does not vanish.
\end{theorem}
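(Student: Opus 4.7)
The plan is to adapt Alon's original argument for the classical Combinatorial Nullstellensatz, replacing evaluation-as-homomorphism by careful right division in $D[x]$. The key technical ingredient is a non-commutative remainder theorem: writing $f(x) = q(x)(x-a) + r$ via right division in $D[x]$, a direct coefficient calculation (the telescoping sum $\sum_k (q_{k-1} - q_k a)a^k = 0$) yields $r = \sum_j c_j a^j =: f(a)$. From this, one deduces the following univariate vanishing statement: if $a_1,\dots,a_m \in D$ pairwise commute and $f \in D[x]$ of degree less than $m$ satisfies $f(a_i) = 0$ for each $i$, then $f = 0$. Indeed, write $f(x) = q(x)(x - a_1)$; the commuting hypothesis lets one compute that the right-evaluation of $q(x)(x-a_1)$ at $a_i$ equals $q(a_i)(a_i - a_1)$, and since $a_i - a_1$ is invertible in $D$ this forces $q(a_i) = 0$ for $i \geq 2$. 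Iterating the argument strips off one factor at a time and ends at a constant polynomial that vanishes somewhere, hence is zero.

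I then extend this to a multivariate grid-vanishing form: if $f \in D[x_1,\dots,x_n]$ satisfies $\deg_{x_i} f < |A_i|$ for every $i$ and vanishes on $A_1 \times \cdots \times A_n \subset D_c^n$, then $f = 0$. This goes by induction on $n$, writing $f = \sum_j f_j(x_1,\dots,x_{n-1}) x_n^j$ and, for each fixed $(b_1,\dots,b_{n-1}) \in A_1 \times \cdots \times A_{n-1}$, applying the univariate vanishing statement to the specialized polynomial in $x_n$, using that elements of $A_n$ can be combined with the fixed $b_i$'s inside $D_c^n$ to supply the required commuting roots. With this in hand, the stated theorem follows by the familiar reduction: assuming for contradiction that $p$ vanishes on $A_1 \times \cdots \times A_n$, choose $A_i' \subseteq A_i$ with $|A_i'| = k_i + 1$, form the monic polynomial $g_i(x) = \prod_{a \in A_i'}(x - a)$ of degree $k_i + 1$, and successively right-divide $p$ by $g_i(x_i)$ in the variable $x_i$ to obtain a reduced polynomial $\tilde p$ with $\deg_{x_i} \tilde p \leq k_i$. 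Since each $g_i$ is monic of degree $k_i + 1$, none of the reductions can touch the coefficient of $x_1^{k_1} \cdots x_n^{k_n}$; moreover $\tilde p$ still vanishes on the grid, so by the grid-vanishing statement $\tilde p = 0$, contradicting the non-vanishing of its top coefficient.

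The main obstacle is the univariate base case. Over a non-commutative division ring a polynomial of degree $k$ can have infinitely many roots (e.g. $x^2 + 1$ in the quaternions), so the classical root bound fails without extra structure. The hypotheses that each $A_i$ is algebraic and that $A_1 \times \cdots \times A_n \subset D_c^n$ are precisely what is needed to confine the relevant roots to a commutative subring of $D$ and restore the Vandermonde-type rigidity that drives Alon's argument. A further subtlety is that evaluation of a polynomial in $D[x_1,\dots,x_n]$ at a commuting tuple is not a ring homomorphism, so every step must be phrased in terms of coefficient identities and right-division rather than via a homomorphic evaluation map; this is also what forces the multivariate induction to be organized variable-by-variable, peeling off one indeterminate at a time, rather than all at once.
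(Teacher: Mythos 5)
First, a caveat about the comparison: the paper does not actually prove this statement. It is quoted verbatim from Paran's article \cite{Paran1} as background; the paper's own contribution is the ``version III'' (Theorem \ref{ComNulstellensatz}), which replaces the hypotheses ``algebraic subsets with $A_1\times\dots\times A_n\subset D_c^n$'' by ``each $A_i$ consists of pairwise non-conjugate elements'' and is proved by induction on $n$ with the Gordon--Motzkin root bound (Lemma \ref{gordon}) as the univariate engine. Your multivariate skeleton (fix $(b_1,\dots,b_{n-1})$, specialize, apply a univariate vanishing statement, conclude the coefficient polynomials vanish, induct) is essentially the same as the paper's; the divergence, and the problem, is entirely in the univariate base case.

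The genuine gap is that your argument needs the elements \emph{within} each single set $A_i$ to commute with one another, and the hypotheses do not provide this. The condition $A_1\times\dots\times A_n\subset D_c^n$ says that any tuple drawn from \emph{distinct} factors is a commuting tuple; it says nothing about two elements $a,a'\in A_i$, and for $n=1$ it is vacuous. Without within-set commutativity, both of your key steps fail: (i) the Vandermonde-style stripping lemma (``a polynomial of degree $<m$ vanishing at $m$ pairwise commuting points is zero'') does not apply, and indeed a degree-$k$ polynomial over a division ring can vanish at more than $k$ pairwise non-commuting, non-conjugate points unless one invokes the structure of algebraic sets; (ii) the auxiliary polynomial $g_i(x)=\prod_{a\in A_i'}(x-a)$ need not vanish on $A_i'$ at all when the factors do not commute --- in $\mathbb{H}$ one has $(x-i)(x-j)=x^2-(i+j)x+ij$, whose right evaluation at $i$ is $ij-ji=2ij\neq 0$. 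The hypothesis that actually carries the univariate case in Paran's theorem is the \emph{algebraicity} of each $A_i$: for an algebraic set $A$ the minimal polynomial $f_A$ has degree $|A|$ and every polynomial vanishing on $A$ is a left multiple of $f_A$ (Lam--Leroy theory, resting on Gordon--Motzkin), which is what forces a polynomial of degree $\le k_i$ vanishing on $A_i$ to be zero, and what supplies a legitimate monic divisor of degree $k_i+1$ for the reduction step. Your proposal invokes algebraicity only to assert that it ``confines the relevant roots to a commutative subring,'' which is not true and is not what the hypothesis is for; to repair the proof you would need to replace $\prod_{a\in A_i'}(x_i-a)$ by the minimal polynomial of a suitable algebraic subset and rerun the reduction with the left-multiple property in place of commutativity. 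The cross-commutativity hypothesis is still needed, but only where you already use it: to make the partial evaluations and the product rule for right evaluation behave.
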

In this paper, we present a new version of this theorem in which each set $A_i$ is not required to be algebraic, and the condition that the elements in $A_i$ commute with the elements in $A_j$ is removed. Instead, we only require that the elements of each $A_i$ are pairwise non-conjugate.

\begin{theorem}
	[Combinatorial Nullstellensatz over division rings III]\label{ComNulstellensatz} Let $D$ be a division ring and $$\displaystyle f(t_1,\dots,t_n)=\sum_{I=(i_1,\dots,i_n)\in \mathbb{N}^n}b_I t_1^{i_1}\dots t_n^{i_n}$$ be a   polynomial of degree $m>0$ in $D[t_1,\dots,t_n]$.
	Let $A_1,\dots, A_n$ be subsets of  $D$ such that the elements of each $A_i$ are pairwise non-conjugate and $|A_i|>m$ for $i=1,\dots,n$. Then there exists $(a_1,\dots,a_n)\in A_1\times\dots\times A_n$ such that $$f(a_1,\dots,a_n)= \sum_{I=(i_1,\dots,i_n)\in \mathbb{N}^n}b_I a_1^{i_1}\dots a_n^{i_n}\ne 0.$$
\end{theorem}

%Our proof is different from Paran's proof and Michalek's proof. Our theorem also generalizes \cite[Lemma 3.1]{Thieu1}, in fact if each set $A_i$ is contained in the center $Z(D)$, then we obtain \cite[Lemma 3.1]{Thieu1}.

\section{Preliminaries}
In this section, we present basic properties of the iterated skew polynomial ring $R[t_1;\omega_1,\delta_1] \dots[t_n;\omega_n,\delta_n]$ and the ring $R[t_1,\dots,t_n; (\omega_1,\delta_1),\dots,(\omega_n,\delta_n)]$. Standard references for this part include  \cite{Goodearl} and \cite{Ore}. This topic has been studied intensively from both theoretical point of view and applied one, for example, see \cite{Paran2, Boucher, Martinez, Goodearl2,Taelman}. Throughout this paper, all rings are assumed to be associative with unity, by a division ring we mean a ring in which every nonzero element has a multiplicative inverse.

\begin{definition}
	Let $R$ be a ring and $\omega$ be an endomorphism of $R$. An additive map $\delta:R\lra R$ is called an $\omega$-derivation of $R$ if  $\delta(ab)=\omega(a)\delta(b)+\delta(a)b,\ \forall a,b\in R$.
\end{definition}

The following two definitions are based on Chapter 2 of \cite{Goodearl}.
\begin{definition}\label{skew_poly_ring1} Let $R \subseteq S$ be a ring extension and $t_1,\dots,t_n$ be indeterminates.
	\begin{enumerate}
		\item  Let $\omega\in\mathrm{Aut}(R)$ and $\delta$ be an $\omega$-derivation  of $R$, an element $s\in S$ is said to be {\em automorphic} over $R$ with respect to $(\omega,\delta)$ if $$sr=\omega(r)s+\delta(r), \text{ for all } r\in R.$$ %for all $r\in R$. %{\color{red}(Cf. )}
		\item Let $\omega_1\in\mathrm{Aut}(R)$ and  $\delta_1$ be an $\omega_1$-derivation  of $R$, the {\em skew polynomial ring} $R[t_1;\omega_1,\delta_1] $ is the ring containing $R$ as a subring such that $R[t_1;\omega_1,\delta_1] $ is a free left module over $R$ with basis $1,t_1,t_1^2,\dots$, and $$t_1r=\omega_1(r)t_1+\delta_1(r), \text{ for all } r\in R.$$
		\item The {\em iterated skew polynomial ring} is defined by $$R[t_1;\omega_1,\delta_1] \dots[t_n;\omega_n,\delta_n] = R_n$$
        where $R_1=R[t_1;\omega_1,\delta_1]$, and inductively, for $i=2,\dots,n$, 
        $$R_i=R_{i-1}[t_i;\omega_i,\delta_i]$$ in which $\omega_i$ is an automorphism of $R_{i-1}$ and $\delta_i$ is an $\omega_i$-derivation of $R_{i-1}$. Explicitly, the condition $$t_if=\omega_i(f)t_i+\delta_{i}(f),\ \forall f\in R_{i-1},$$
		is equivalent to
	$$	\begin{cases}
			 t_ir=\omega_i(r)t_i+\delta_{i}(r) \text{ for all  } r\in R \\
			 t_it_j=\omega_i(t_j)t_i+\delta_{i}(t_j) \text{ for all } 1\le j<i
		\end{cases}.$$ 
		
		\item  Elements $s_1,\dots,s_n$ in $S$ are said to form an {\em iterated automorphic sequence over} $R$ with respect to $(\omega_1,\delta_1),\dots,(\omega_n,\delta_n)$ if for $i$ from 1 to $n$, inductively, $\omega_i$ is an automorphism of $R_{i-1}= R[s_1;\omega_1,\delta_1]\dots[s_{i-1};\omega_{i-1},\delta_{i-1}]$, $\delta_i$ is an $\omega_i$-derivation of $R_{i-1}$, and $$s_if=\omega_i(f)s_i+\delta_{i}(f),\ \forall f\in R_{i-1}.$$
	\end{enumerate} 
\end{definition}

\begin{definition}\label{skew_poly_ring2}
	Let $R$ be a ring, and let $\omega_1,\dots,\omega_n\in\Aut(R)$. For $i=1,\dots,n$,  let $\delta_i$ be an $\omega_i$-derivation of $R$. The ring of skew polynomials $R[t_1,\dots,t_n; (\omega_1,\delta_1),\dots,$ $(\omega_n,\delta_n)]$ is defined to be the ring consisting of $R$ and {commuting} variables $t_1,\dots, t_n$, where the multiplication subjects to the relations
	$$t_ir=\omega_i(r)t_i+\delta_i(r),\ \text{ for all } r\in R,\ i=1,\dots n.$$
\end{definition}

\begin{remark}\label{Convertt}
	We can convert the skew polynomial ring $R[t_1,\dots,t_n; (\omega_1,\delta_1),\dots,(\omega_n,\delta_n)]$ into the iterated skew polynomial ring $R[t_1;\bar{\omega}_1,\bar{\delta}_1]\dots[t_n;\bar{\omega}_n,\bar{\delta}_n]$ by setting $\bar{\omega}_1=\omega_1$, $\bar{\delta}_1=\delta_1$, for $i=2,\dots,n$, we set
   $$ \left\{\begin{matrix}
 \bar{\omega}_i(r)=\omega_i(r)& \text{ for all } r\in R\\
 \bar{\omega}_i(t_j)=t_j& \text{ for } j=1,\dots,i-1 \\
\end{matrix}\right.$$

and    $$ \left\{\begin{matrix}
 \bar{\delta}_i(r)=\delta_i(r)& \text{ for all } r\in R\\
 \bar{\delta}_i(t_j)=0& \text{ for } j=1,\dots,i-1. \\
\end{matrix}\right.$$
    
\end{remark}

\medskip
The following lemma is well known.
\begin{lemma}\label{leftbasis}
Let $R\subseteq S \subseteq T$ be a sequence of ring extensions. Assume that $S$ is a free left module over $R$ with basis $\{u_i: i\in \mathbb{N}\}$, and that $T$ is a left free module over $S$ with basis $\{v_j: j\in \mathbb{N}\}$. Then $T$ is a left free module over $R$ with basis $\{u_iv_j: i,j\in\mathbb{N}\}$.
\end{lemma}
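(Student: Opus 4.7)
The statement is the standard tower law for free left modules, and the plan is to verify the two defining properties of a basis separately: spanning and linear independence. The key subtlety to flag is that we are working in a possibly noncommutative setting, so one must be careful to keep scalars on the left throughout and to invoke only associativity of multiplication in $T$, never commutativity.

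First I would verify that $\{u_i v_j : i,j \in \mathbb{N}\}$ generates $T$ as a left $R$-module. Given $t \in T$, the hypothesis that $\{v_j\}$ is a left $S$-basis of $T$ yields a unique expression $t = \sum_{j} s_j v_j$ with $s_j \in S$ and almost all $s_j = 0$. Similarly, for each nonzero $s_j$, the hypothesis on $S$ gives $s_j = \sum_i r_{ij} u_i$ with $r_{ij} \in R$ and almost all $r_{ij}=0$. Substituting and using associativity of multiplication in $T$, one obtains
\[
t = \sum_{j} \left(\sum_i r_{ij} u_i\right) v_j = \sum_{i,j} r_{ij} (u_i v_j),
\]
which exhibits $t$ as a left $R$-linear combination of the $u_i v_j$. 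The finite-support condition is preserved, so the expression is well defined.

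Next I would check left $R$-linear independence. Suppose $\sum_{i,j} r_{ij}(u_i v_j) = 0$ with $r_{ij} \in R$ and only finitely many nonzero. Regroup by the index $j$:
\[
\sum_{j} \left(\sum_i r_{ij} u_i\right) v_j = 0,
\]
which is legitimate by associativity since each inner sum $s_j := \sum_i r_{ij} u_i$ lies in $S$. Because $\{v_j\}$ is left $S$-linearly independent, we conclude $s_j = 0$ for every $j$. Applying left $R$-linear independence of $\{u_i\}$ to each equation $\sum_i r_{ij} u_i = 0$ then forces $r_{ij}=0$ for all $i,j$.

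Combining these two steps shows that $\{u_i v_j\}$ is a left $R$-basis of $T$, proving the lemma. The proof is entirely routine; the only point requiring care is the double regrouping, which uses associativity in $T$ (to move the $R$-scalars past the $u_i$'s when they sit inside a longer product) together with the fact that each $u_i$ belongs to $S$, so that the inner sums $\sum_i r_{ij} u_i$ are genuine elements of $S$ and thus legal $S$-coefficients of the $v_j$. No hypothesis on the skew structure $(\omega_i, \delta_i)$ is needed, so the lemma really is a purely formal tower-of-bases statement.
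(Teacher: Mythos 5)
Your proof is correct and is exactly the standard tower-of-bases argument; the paper states this lemma as ``well known'' and gives no proof at all, so there is nothing to compare against, but your spanning-plus-independence verification (with the correct care about keeping scalars on the left) is precisely the argument being implicitly invoked.
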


The following lemma is necessary for subsequent computations.
\begin{lemma}\label{calculation1} Let $S=R[t_1;\omega_1,\delta_1]\dots[t_n;\omega_n,\delta_n]$ be an iterated skew polynomial ring over a ring $R$. Then 
	\begin{enumerate}
		\item  For $k\ge 1$, $$t_i^kr=\omega_i^k(r)t_i^k+a_{k-1}t_i^{k-1}+\dots+a_1t_i+\delta_i^k(r)$$ for some elements $a_1,\dots, a_{k-1}\in R$.
		\item $$t_1^{i_1}\dots t_n^{i_n}r=\omega_1^{i_1}\circ\cdots\circ\omega_n^{i_n}(r)t_1^{i_1}\dots t_n^{i_n}+ \sum_{K=(k_1,\dots,k_n)\in \mathbb{N}^n}a_Kt_1^{k_1}\dots t_n^{k_n} $$
		where $a_K\in R$ and the summation is taken over all $K = (k_1, \ldots, k_n) \in \mathbb{N}^n$ satisfying $0\le k_j\le i_j$, and $k_1+\dots+k_n<i_1+\dots+i_n$.
		\item For $r\in R$, $$(rt_j)^m=r\omega_j(r)\dots\omega_j^{m-1}(r)t_j^m+ a_{m-1}t_j^{m-1}+\dots+a_1t_j$$ for some $a_1,\dots,a_{m-1}\in R$.
		\item $S$ is a free left module over $R$ with basis $\{t_1^{i_1}\dots t_n^{i_n}: i_1,\dots, i_n\ge 0\}.$
%		\item For any permutation $\sigma$ on the set $\{1,\dots,n\}$, and non-negative integers $i_1,\dots,i_n\ge0$. Then $$t_{\sigma(1)}^{i_{\sigma(1)}}\dots t_{\sigma(n)}^{i_{\sigma(n)}}=rt_1^{i_1}\dots t_n^{i_n}+ \sum_{\substack{j_1+\dots+j_n<i_1+\cdots+i_n\\j_t\le i_t, \forall t=1,\dots,n}}r_{j_1,\dots,j_n}t_1^{j_1}\dots t_n^{j_n} $$	
%		for some $r, r_{j_1,\dots,j_n} \in R$.	
	\end{enumerate}
\end{lemma}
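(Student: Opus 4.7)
The plan is to establish (1), (2), (3) by straightforward inductions on the relevant exponents, and then to deduce (4) by iterating Lemma \ref{leftbasis} along the tower $R \subseteq R_1 \subseteq \cdots \subseteq R_n = S$. Throughout, the only manipulation that is ever needed is the single-variable commutation relation $t_i r = \omega_i(r) t_i + \delta_i(r)$ together with its analogue for the inductively built extensions in Remark \ref{Convertt}.

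For (1) I would induct on $k$. The base case $k=1$ is the defining relation of $R[t_i;\omega_i,\delta_i]$. Assuming the formula at level $k-1$, multiplying on the left by $t_i$ and commuting through each coefficient via the degree-one relation (applied both to $\omega_i^{k-1}(r)$ and to each intermediate $a_j$) produces $\omega_i(\omega_i^{k-1}(r)) t_i^k = \omega_i^k(r)t_i^k$ as the leading term and $\delta_i(\delta_i^{k-1}(r)) = \delta_i^k(r)$ as the constant term, with the remaining contributions lying in $Rt_i^{k-1}+\cdots+Rt_i$. For (2) I would induct on $n$, using (1) as the base case. Writing $t_1^{i_1}\cdots t_n^{i_n} r = t_1^{i_1}\cdots t_{n-1}^{i_{n-1}}(t_n^{i_n} r)$, I would apply (1) to push $r$ past $t_n^{i_n}$ and then the inductive hypothesis to each resulting monomial; the leading coefficient assembles as the composition $\omega_1^{i_1}\circ\cdots\circ\omega_n^{i_n}(r)$, while the error terms satisfy $0\le k_j\le i_j$ and $k_1+\cdots+k_n<i_1+\cdots+i_n$ because every application of the defining relation either strictly lowers the power of the $t_j$ it acts on or replaces it by a constant.

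For (3) I would again induct on $m$. Writing $(rt_j)^m=(rt_j)^{m-1}\cdot rt_j$, the inductive hypothesis expresses $(rt_j)^{m-1}$ as $r\omega_j(r)\cdots\omega_j^{m-2}(r)t_j^{m-1}+\sum_{\ell=1}^{m-2}b_\ell t_j^\ell$. Multiplying on the right by $rt_j$ and commuting each $t_j^\ell$ past the middle $r$ via (1) yields $r\omega_j(r)\cdots\omega_j^{m-1}(r)t_j^m$ as the leading contribution, with all remaining summands landing in $Rt_j+Rt_j^2+\cdots+Rt_j^{m-1}$; no constant term survives because every monomial on the right ends with a $t_j$. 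For (4) I would invoke Lemma \ref{leftbasis}: by construction each $R_i$ is a free left $R_{i-1}$-module on $\{1,t_i,t_i^2,\dots\}$, so iterating Lemma \ref{leftbasis} along $R\subseteq R_1\subseteq\cdots\subseteq R_n$ exhibits $S$ as a free left $R$-module with basis $\{t_1^{i_1}\cdots t_n^{i_n}:i_1,\dots,i_n\ge 0\}$.

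The main obstacle is the combinatorial bookkeeping in (2): the inductive hypothesis must be stated tightly enough to track simultaneously the exact leading coefficient, the componentwise bound $k_j\le i_j$, and the strict total-degree bound. Once the hypothesis is formulated with this precision, each individual commutation step is immediate from (1); the real work lies in arranging the induction so that these three constraints propagate correctly through both the inner use of (1) in the variable $t_n$ and the outer induction on the number of variables.
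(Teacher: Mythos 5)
Your proof is correct and follows essentially the same route as the paper: parts (1)--(3) by induction on the relevant exponent using the single commutation relation $t_ir=\omega_i(r)t_i+\delta_i(r)$, and part (4) by iterating Lemma~\ref{leftbasis} along the tower $R\subseteq R_1\subseteq\cdots\subseteq R_n$. The only immaterial difference is in (2), where you peel off $t_n^{i_n}$ and apply (1) first, whereas the paper writes $t_1^{i_1}(t_2^{i_2}\cdots t_n^{i_n}r)$ and applies the inductive hypothesis to the inner factor before using (1) on $t_1^{i_1}$; both orderings propagate the constraints $k_j\le i_j$ and $k_1+\cdots+k_n<i_1+\cdots+i_n$ in the same way.
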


\begin{proof}
	
%{\color{blue}to be write out explicitly....}

(1) The equation holds for $k=1$. For $k>1$, assume that the equation holds up to $k-1$: $$t_i^{j}r=\omega_i^{j}(r)t_i^j+a_{j-1}t_i^{j-1}+\dots+a_1t_i+\delta_i^{j}(r)$$ for  $j\le k-1$, where $a_1,\dots, a_{j-1}\in R$. Then we have 
	\begin{align*}
		t_i^{k}r&=t_i(t_i^{k-1}r)\\
		&=t_i\big(\omega_i^{k-1}(r)t_i^{k-1}+a_{k-2}t_i^{k-2}+\dots+a_1t_i+\delta_i^{k-1}(r)\big) \text { for some } a_1,\dots, a_{k-2}\in R.\\
		&=[\omega_i\omega_i^{k-1}(r)t_i+\delta_i\omega_i^{k-1}(r)]t_i^{k-1}+[\omega_i(a_{k-2})t_i+\delta_i(a_{k-2})]t_i^{k-2}+\cdots\\
		& \ \ \ \ \cdots +[\omega_i(a_1)t_i+\delta_i(a_1)]t_i+\omega_i\delta_i^{k-1}(r)t_i+\delta_i\delta_i^{k-1}(r)\\
		&=\omega_i^k(r)t_i^k+b_{k-1}t_i^{k-1}+\dots+b_1t_i+\delta_i^k(r),
	\end{align*}
where $b_{k-1}=\delta_i\omega_i^{k-1}(r)+\omega_i(a_{k-2})\in R$ and $b_l=\delta_i(a_l)+\omega_i(a_{l-1})\in R$ for $1\le l \le k-2$.

\medskip

(2) When $n=1$, the equation holds true by (1). For $n>1$, assume that the equation holds true up to $n-1$. We now prove that the equation holds true for $n$. By induction hypothesis, we have
\begin{align*}
	t_1^{i_1}\dots t_n^{i_n}r&=t_1^{i_1}(t_2^{i_2}\cdots t_n^{i_n}r)
	\\&=t_1^{i_1}\bigg(\omega_2^{i_2}\circ\cdots\circ\omega_n^{i_n}(r)t_2^{i_2}\dots t_n^{i_n}+ \sum_{\substack{K=(k_2,\dots,k_n)\in \mathbb{N}^n, k_j\le i_j\\ k_2+\cdots+k_n<i_2+\dots+i_n}}a_Kt_2^{k_2}\dots t_n^{k_n}\bigg), \\
	&\ \ \ \ \text { where } a_K\in R\\
	&=\big[\omega_1^{i_1}\circ \cdots \circ\omega_n^{i_n}(r)t_1^{i_1}+\sum_{j=1}^{i_1}a_jt_{i_1}^{i_1-j}\big]t_2^{i_2}\dots t_n^{i_n}\\
	&\ \ \ \ \ +\sum_{\substack{K=(k_2,\dots,k_n)\in \mathbb{N}^n, k_j \le i_j\\ k_2+\cdots+k_n<i_2+\dots+i_n}}\big[\omega_1^{i_1}(a_K)t_1^{i_1}+\sum_{j=1}^{i_1}a_{K,j}t_1^{i_1-j}\big]t_2^{k_2}\dots t_n^{k_n},\\	
    &\ \ \ \ \text { where } a_j\in R\\
    &=\omega_1^{i_1}\circ\cdots \circ\omega_n^{i_n}(r)t_1^{i_1}\dots t_n^{i_n}+ \sum_{\substack{K=(k_1,\dots,k_n)\in \mathbb{N}^n, k_j \le i_j\\ k_1+\cdots+k_n<i_1+\dots+i_n}}b_Kt_1^{k_1}\dots t_n^{k_n},\\
    &\ \ \ \ \text { where } b_K\in R.
\end{align*}

(3) The equation can be proved by induction on $m$ as in (1).

\medskip

(4) We prove the claim by induction on $n$. The case $n=1$ is obvious. Let $R'=R[t_1;\omega_1,\delta_1]\dots[t_{n-1};\omega_{n-1},\delta_{n-1}]$. By induction hypothesis, $R'$ is a left module over $R$ with basis $\{t_1^{i_1}\dots t_{n-1}^{i_{n-1}}: i_1,\dots,i_{n-1}\in\mathbb{N}\}$. Now, by definition, we have $S=R'[t_n; \omega_n,\delta_n]$ is a left free module over $R'$ with basis $\{t_n^{i_n}:i_n\in\mathbb{N}\}$. By Lemma \ref{leftbasis}, $S$ is a left free module over $R$ with basis $\{t_1^{i_1}\dots t_n^{i_n}: i_1,\dots, i_n\in \mathbb{N}\}$.
%\item {\color{red} By applying (4), we obtain the claim.}
\end{proof}

\begin{lemma}\label{goodearl1}
	Let $R\subseteq S$ be a ring extension, $\omega$ be an automorphism of $R$ and $\delta$ be an $\omega$-derivation of $R$. Assume that $s\in S$ is automorphic over $R$ with respect to $(\omega,\delta)$. Then there exists a unique homomorphism $\phi:R[x;\omega,\delta]\lra S$ such that $\phi(x)=s$ and $\phi(r)=r$ for all $r\in R$.
\end{lemma}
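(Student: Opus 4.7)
The plan is to build $\phi$ from the fact, recorded in Lemma \ref{calculation1}(4), that $R[x;\omega,\delta]$ is a free left $R$-module with basis $\{1,x,x^{2},\dots\}$. Since the basis is explicit, I define
\[
\phi\Bigl(\sum_{i\ge 0} r_i x^{i}\Bigr)=\sum_{i\ge 0} r_i s^{i},
\]
which is automatically a well-defined additive map and restricts to the identity on $R$. Uniqueness is immediate from this formula: any ring homomorphism fixing $R$ and sending $x\mapsto s$ must agree with $\phi$ on every element of the basis, hence everywhere.

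The only real content is multiplicativity. First I would check the basic commutation identity on a single element of $R$: using that $s$ is automorphic,
\[
\phi(x)\phi(r)=sr=\omega(r)s+\delta(r)=\phi(\omega(r)x+\delta(r))=\phi(xr).
\]
Then I would upgrade this to $\phi(x^{k})\phi(r)=\phi(x^{k}r)$ for every $k\ge 0$ by induction on $k$. The inductive step writes $s^{k}r=s(s^{k-1}r)$, applies the induction hypothesis to rewrite $s^{k-1}r$ as a left $R$-linear combination of powers of $s$, and then uses the $k=1$ case to push the outer $s$ across each coefficient; on the polynomial side, the same manipulation is codified by Lemma \ref{calculation1}(1), so both sides land on the same expression.

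With $\phi(x^{k}r)=\phi(x^{k})\phi(r)$ established, the general case $\phi(fg)=\phi(f)\phi(g)$ for arbitrary $f=\sum r_i x^{i}$ and $g=\sum r'_j x^{j}$ follows by bilinearity: expanding $fg$ inside $R[x;\omega,\delta]$ reduces to handling products $r_i x^{i}\cdot r'_j x^{j}=r_i(x^{i}r'_j)x^{j}$, and $\phi$ respects each such product by the previous step together with the obvious fact that $\phi(ax^{j})=as^{j}$ for $a\in R$.

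The main obstacle I anticipate is the bookkeeping in the inductive step for $\phi(x^{k}r)=\phi(x^{k})\phi(r)$, where one must verify that the lower-order terms produced by repeated application of the automorphic relation on the $S$-side match, coefficient by coefficient, the lower-order terms produced by Lemma \ref{calculation1}(1) on the $R[x;\omega,\delta]$-side. Once this identification is made — essentially because both recursions are driven by the same rule $xr=\omega(r)x+\delta(r)$ and $sr=\omega(r)s+\delta(r)$ — the rest of the proof is purely formal.
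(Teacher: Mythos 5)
Your argument is correct. The paper does not actually prove this lemma — its ``proof'' is a one-line citation of the universal mapping property of skew polynomial rings (Goodearl--Warfield, Proposition 2.4), so your self-contained verification is the genuinely different (and more informative) route: you define $\phi\bigl(\sum r_i x^i\bigr)=\sum r_i s^i$ using the free left $R$-basis, get uniqueness for free, and reduce multiplicativity to the identity $\phi(x^k r)=s^k r$, which holds by induction because the two recursions are driven by the same rule $xr=\omega(r)x+\delta(r)$ versus $sr=\omega(r)s+\delta(r)$. This is in substance the standard proof of the cited proposition, so nothing is lost and the reader gains an actual argument. One cosmetic remark: the freeness of $\{1,x,x^2,\dots\}$ as a left $R$-basis is built into the paper's definition of $R[x;\omega,\delta]$ (Definition 2.2(2)), so you do not need to invoke Lemma \ref{calculation1}(4) for it; that lemma is only needed for the iterated, multivariable case.
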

\begin{proof}
	Apply \cite[Proposition 2.4, page 37]{Goodearl}.
\end{proof}
\begin{proposition}\label{eval1}
	Let $R \subseteq S$ be a ring extension.  Suppose that $s_1,\dots,s_n\in S$ form an iterated automorphic sequence with respect to $(\omega_1,\delta_1),\dots,(\omega_n,\delta_n)$. Then there exists a unique ring homomorphism $\phi:R[x_1;\omega_1,\delta_1]\dots[x_n;\omega_n,\delta_n]\lra S$ such that $\phi(x_i)=s_i$ for all $1\le i\le n$ and $\phi(r)=r$ for all $r\in R$.
\end{proposition}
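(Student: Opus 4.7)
The plan is to induct on $n$, with Lemma \ref{goodearl1} serving both as the base case and as the engine of the inductive step. For $n=1$, the proposition is exactly Lemma \ref{goodearl1} applied to the single automorphic element $s_1$ over $R$ with respect to $(\omega_1,\delta_1)$.

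For the inductive step, write $A_{n-1}^{\mathrm{abs}} := R[x_1;\omega_1,\delta_1]\dots[x_{n-1};\omega_{n-1},\delta_{n-1}]$ and $A_{n-1}^{\mathrm{sub}} := R[s_1;\omega_1,\delta_1]\dots[s_{n-1};\omega_{n-1},\delta_{n-1}] \subseteq S$, the latter being the subring built into the definition of the iterated automorphic sequence. By the induction hypothesis there is a unique homomorphism $\phi_{n-1} : A_{n-1}^{\mathrm{abs}} \to S$ with $\phi_{n-1}(x_i) = s_i$ for $i \le n-1$ and $\phi_{n-1}|_R = \mathrm{id}$, whose image is precisely $A_{n-1}^{\mathrm{sub}}$. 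By Lemma \ref{calculation1}(4), both $A_{n-1}^{\mathrm{abs}}$ and $A_{n-1}^{\mathrm{sub}}$ are free left $R$-modules with analogous monomial bases $\{x_1^{i_1}\dots x_{n-1}^{i_{n-1}}\}$ and $\{s_1^{i_1}\dots s_{n-1}^{i_{n-1}}\}$ respectively, and $\phi_{n-1}$ carries the former basis bijectively to the latter. Hence $\phi_{n-1}$ is a ring isomorphism onto $A_{n-1}^{\mathrm{sub}}$, so the data $(\omega_n,\delta_n)$ living on $A_{n-1}^{\mathrm{sub}}$ pulls back unambiguously to $A_{n-1}^{\mathrm{abs}}$.

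Since $s_n$ is automorphic over $A_{n-1}^{\mathrm{sub}}$ with respect to $(\omega_n,\delta_n)$, Lemma \ref{goodearl1} applied to the extension $A_{n-1}^{\mathrm{sub}} \subseteq S$ produces a unique ring homomorphism $\psi : A_{n-1}^{\mathrm{sub}}[x_n;\omega_n,\delta_n] \to S$ sending $x_n$ to $s_n$ and restricting to the identity on $A_{n-1}^{\mathrm{sub}}$. The isomorphism $\phi_{n-1}$ lifts, by the functoriality of the skew polynomial construction in the base along isomorphisms, to a ring isomorphism
\[
\tilde{\phi}_{n-1} : A_{n-1}^{\mathrm{abs}}[x_n;\omega_n,\delta_n] \longrightarrow A_{n-1}^{\mathrm{sub}}[x_n;\omega_n,\delta_n],\qquad x_n \longmapsto x_n.
\]
The composite $\phi := \psi \circ \tilde{\phi}_{n-1}$ is then the required homomorphism, sending each $x_i$ to $s_i$ and fixing $R$. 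Uniqueness is immediate since any such $\phi$ is determined by its action on the ring generators $R \cup \{x_1,\dots,x_n\}$.

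The main obstacle is conceptual rather than computational: one must ensure that the pair $(\omega_n,\delta_n)$, which in the definition of an iterated automorphic sequence is declared to act on the subring $A_{n-1}^{\mathrm{sub}} \subseteq S$, can be legitimately interpreted as acting on the abstract ring $A_{n-1}^{\mathrm{abs}}$ so that the target skew polynomial ring in the statement of the proposition is well defined. This is precisely what the injectivity of $\phi_{n-1}$ provides, and that injectivity in turn rests on the freeness of both iterated skew polynomial rings with the expected monomial bases supplied by Lemma \ref{calculation1}(4).
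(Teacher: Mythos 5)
Your proof is correct and follows essentially the same route as the paper: induct on $n$, use Lemma \ref{goodearl1} to adjoin $s_n$ over the image $U = R[s_1;\omega_1,\delta_1]\cdots[s_{n-1};\omega_{n-1},\delta_{n-1}]$ of the inductively constructed map, and compose with the natural extension of that map to the skew polynomial rings in $x_n$. The only difference is that you spell out why the inductive map is an isomorphism onto its image (via the monomial bases supplied by Lemma \ref{calculation1}(4)), a point the paper leaves implicit in the phrase ``extended naturally.''
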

\begin{proof}
	Let $T=R[x_1;\omega_1,\delta_1]\dots[x_{n-1};\omega_{n-1},\delta_{n-1}]$. By induction, there exists a unique homomorphism $\varphi:T\lra S$ such that $\varphi(x_i)=s_i$ for all $1\le i\le n-1$ and $\varphi(r)=r$ for all $r\in R$. Let $U=\varphi(T)=R[s_1;\omega_1,\delta_1]\dots[s_{n-1};\omega_{n-1},\delta_{n-1}]$. By Lemma \ref{goodearl1}, there exists a unique homomorphism $\varphi':U[x_n;\omega_n,\delta_n]\lra S$ such that $\varphi'(u)=u$ for all $u\in U$ and $\varphi'(x_n)=s_n$. On the other hand, the homomorphism $\varphi$ is extended naturally to $\bar{\varphi} :T[x_n; \omega_n,\delta_n]\lra U[x_n; \omega_n,\delta_n]$. Our proof is completed by setting $\phi=\varphi'\circ\bar{\varphi}$.
\end{proof}

\begin{proposition}\label{eval2}
Let $R\subseteq S$ be a ring extension. Let $\omega_1,\dots,\omega_n\in\Aut(R)$, and let $\delta_i$ be $\omega_i$-derivation of $R$ for $i=1,\dots,n$. Assume that $s_1,\dots,s_n \in S$ are commuting automorphic elements over $R$ with respect to $(\omega_1,\delta_1),$ $\dots,(\omega_n,\delta_n)$ respectively. Then there exists a unique ring homomorphism $$\phi:R[x_1,\dots,x_n; (\omega_1,\delta_1),\dots,(\omega_n,\delta_n)] \lra S$$  such that $\phi(r)=r, \forall r\in R$ and $\phi(x_i)=s_i,\ \forall i=1,\dots, n$. In other words, the evaluation map at $(s_1,\dots,s_n)$ $$\mathrm{Eval}_{(s_1,\dots,s_n)}:  R[x_1,\dots,x_n; (\omega_1,\delta_1),\dots,(\omega_n,\delta_n)] \lra S$$
where $\mathrm{Eval}_{(s_1,\dots,s_n)}(f):=f(s_1,\dots,s_n)$ is a ring homomorphism. In particular, $\mathrm{Eval}_{(s_1,\dots,s_n)}(r)=r$ for all $r\in R$.
\end{proposition}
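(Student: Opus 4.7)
The plan is to reduce the claim to Proposition \ref{eval1} through the conversion of Remark \ref{Convertt}. Let $\bar\omega_i,\bar\delta_i$ denote the extended data defined there, so that there is a canonical isomorphism
\[
\Psi\colon R[x_1,\dots,x_n;(\omega_1,\delta_1),\dots,(\omega_n,\delta_n)] \xrightarrow{\ \sim\ } R[x_1;\bar\omega_1,\bar\delta_1]\dots[x_n;\bar\omega_n,\bar\delta_n]
\]
fixing $R$ and sending each $x_i$ to $x_i$. Once I establish that $s_1,\dots,s_n$ form an iterated automorphic sequence in $S$ with respect to $(\bar\omega_1,\bar\delta_1),\dots,(\bar\omega_n,\bar\delta_n)$, Proposition \ref{eval1} yields a unique homomorphism from the right-hand side to $S$ mapping $x_i\mapsto s_i$ and fixing $R$; composing with $\Psi$ produces the desired $\phi$. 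Uniqueness is automatic, since $R$ together with $x_1,\dots,x_n$ generates the domain and $\phi$ is prescribed on these generators.

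The crux is therefore to verify the iterated automorphic property inductively. Set $R_0 = R$ and, for $1\le i\le n$, let $R_{i-1}\subseteq S$ be the subring generated by $R$ and $s_1,\dots,s_{i-1}$. At step $i$ one must extend $\omega_i$ to an automorphism $\bar\omega_i$ of $R_{i-1}$ by declaring $\bar\omega_i|_R = \omega_i$ and $\bar\omega_i(s_j) = s_j$ for $j<i$, and similarly extend $\delta_i$ to an $\bar\omega_i$-derivation $\bar\delta_i$ vanishing on each $s_j$. That these extensions respect the defining relations $s_j r = \omega_j(r)s_j + \delta_j(r)$ of $R_{i-1}$ is equivalent to the compatibilities
\[
\omega_i\omega_j = \omega_j\omega_i,\quad \omega_i\delta_j = \delta_j\omega_i,\quad \delta_i\omega_j = \omega_j\delta_i,\quad \delta_i\delta_j = \delta_j\delta_i,
\]
which are already implicit in the very existence of the commuting skew polynomial ring appearing in the statement. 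With these extensions in hand, the identity $s_i f = \bar\omega_i(f)s_i + \bar\delta_i(f)$ required of an automorphic element need only be checked on a generating set of $R_{i-1}$: for $f = r\in R$ it is the hypothesis that $s_i$ is automorphic over $R$ with respect to $(\omega_i,\delta_i)$, and for $f = s_j$ with $j<i$ it reduces via $\bar\omega_i(s_j)=s_j$, $\bar\delta_i(s_j)=0$ to the commutativity $s_is_j = s_js_i$.

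The step I expect to be the main obstacle is verifying that the proposed $\bar\omega_i,\bar\delta_i$ on $R_{i-1}$ are well-defined, since $R_{i-1}$ is non-commutative and carries iterated skew relations. The cleanest route is to identify $R_{i-1}$ recursively with the abstract ring $R[x_1;\bar\omega_1,\bar\delta_1]\dots[x_{i-1};\bar\omega_{i-1},\bar\delta_{i-1}]$ by an application of Proposition \ref{eval1} at the previous stage of the induction, and then to extend $\omega_i$ one variable at a time using the universal property of a single skew polynomial extension that underlies Lemma \ref{goodearl1}. The compatibility identities above are precisely what is needed for each such one-variable extension to go through, and once they are in place, Lemma \ref{calculation1}(2) guarantees that the verification on generators propagates to all monomials. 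Everything else, including uniqueness, is formal.
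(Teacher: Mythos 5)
Your proposal follows essentially the same route as the paper: convert to the iterated ring $R[x_1;\bar\omega_1,\bar\delta_1]\dots[x_n;\bar\omega_n,\bar\delta_n]$ via Remark \ref{Convertt} and apply Proposition \ref{eval1}. The only difference is that you spell out the verification that $s_1,\dots,s_n$ form an iterated automorphic sequence for the extended data (including the needed commutation identities among the $\omega_i,\delta_j$), a step the paper simply asserts; your argument for it is correct.
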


\begin{proof}
By Remark \ref{Convertt}, we can convert the ring $R[x_1,\dots,x_n; (\omega_1,\delta_1),\dots,(\omega_n,\delta_n)]$ into the ring $R_n=R[t_1;\bar{\omega}_1,\bar{\delta}_1]\dots[t_n;\bar{\omega}_n,\bar{\delta}_n]$.
	%Let $R_1=R[x_1;\omega_1,\delta_1]$, we define by induction on $i$ $(2\le i \le n) $ the rings\\ $R_i=R_{i-1}[x_i;\bar{\omega}_i,\bar{\delta_i}]=  \big(R[x_1;\bar{\omega}_1,\bar{\delta}_1]\dots[x_{i};\bar{\omega}_{i-1},\bar{\delta}_{i-1} ]\big)[x_i;\bar{\omega}_i,\bar{\delta}_i]$ as follows. For all $r\in R$ we set $\bar{\omega}_i(r)=\omega_i(r)$, and $\bar{\omega}_i(x_j)=x_j$ for $j=1,\dots,i-1$. It is easy to see that $\bar{\omega}_i$ is automorphism of $R_{i-1}$. Similarly, we set $\bar{\delta}_i(r)=\delta_i(r) $ and $\bar{\delta}_i(x_j)=0$ for all $r\in R$ and $i=1,\dots, j-1$. Then $\bar{\delta}_i$ is a $\bar{\omega}_i$-derivation over $R_{i-1}$.
    It is easy to see that the identity map $\mathrm{id}:R[x_1,\dots,x_n; (\omega_1,\delta_1),\dots,(\omega_n,\delta_n)]\lra R_n$ is an isomorphism of rings. Moreover, $s_1,\dots,s_n$ is an iterated automorphic sequence with respect to $(\bar{\omega}_1,\bar{\delta}_1),\dots,(\bar{\omega}_n,\bar{\delta}_n)$ over $R$. Let $\phi: R_n\lra S$ be the homomorphism defined as in Proposition \ref{eval1}. For any skew polynomial $f$ in $R[x_1,\dots,x_n; (\omega_1,\delta_1),\dots,(\omega_n,\delta_n)]$, we define $\mathrm{Eval}_{(s_1,\dots,s_n)}(f)=\phi(f)=f(s_1,\dots,s_n)$,  thus $\mathrm{Eval}_{s_1,\dots,s_n}$ is a homomorphism of rings as desired. 
\end{proof}

\section{Noether's nomarlization over division rings}

In this section, we introduce the notion \textit{``skew normalizable"}. Then we present our main result, Theorem \ref{main1}, which establishes the normalizability of the tuple $(\omega,\delta_1;\dots;\omega,\delta_n)$. We also prove a new version of the Combinatorial Nullstellensatz which plays a crucial role in our proof of the main result.

\begin{definition}
	Let $R\subseteq S$ be a ring extension. We say that $S$ is {\em  skew finitely generated} over $R$ if $S$ is isomorphic to a quotient ring $$R[t_1,\dots,t_n; (\omega_1,\delta_1),\dots,(\omega_n,\delta_n)]/I,$$ where
	\begin{itemize}
		\item [(1)] $\omega_1,\dots,\omega_n$ are automorphisms of $R$, and $\delta_i$ is an $\omega_i$-derivation of $R$ for $i=1,\dots,n$,
		\item [(2)] $I$ is a two-sided ideal of $R[t_1,\dots,t_n; (\omega_1,\delta_1),\dots,(\omega_n,\delta_n)]$ such that $I\cap R=\{0\}$.
	\end{itemize}
\end{definition}

\begin{proposition} \label{prop3.2}
	Let $R\subseteq S$ be a ring extension. Then $S$ is skew finitely generated over $R$ if and only if there exist automorphisms $\omega_1,\dots,\omega_m\in\Aut(R)$, and $\omega_i$-derivation $\delta_i$ of $R$  for $i=1,\dots,m$, and there are commuting elements $s_1,\dots,s_m\in S$ which are automorphic over $R$ with respect to $(\omega_1,\delta_1),\dots,(\omega_m,\delta_m)$ respectively such that $S=R[s_1,\dots,s_m]$. Sometimes, to emphasize, we also write this ring $R[s_1,\ldots,s_m]$ as $R[s_1,\ldots,s_m; (\omega_1,\delta_1),\dots,(\omega_m,\delta_m)]$.
\end{proposition}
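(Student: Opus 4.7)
The plan is to prove the two implications directly by unwinding the definitions and invoking the universal property established in Proposition \ref{eval2}. Both directions are essentially formal, since the key nontrivial content (existence of the evaluation homomorphism for commuting automorphic tuples) has already been done.

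For the forward implication, suppose $S$ is automorphically finitely generated, so we have an isomorphism $\psi \colon R[t_1,\dots,t_n;(\omega_1,\delta_1),\dots,(\omega_n,\delta_n)]/I \xrightarrow{\sim} S$ with $I \cap R = \{0\}$, which guarantees that the composition $R \hookrightarrow R[t_1,\dots,t_n;\ldots] \twoheadrightarrow R[t_1,\dots,t_n;\ldots]/I \xrightarrow{\psi} S$ is injective and agrees with the inclusion $R \subseteq S$. I would then set $s_i := \psi(\ovl{t_i})$ for each $i$. Since the $t_i$ are commuting variables in $R[t_1,\dots,t_n;(\omega_1,\delta_1),\dots,(\omega_n,\delta_n)]$, their images $s_i$ commute in $S$. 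The defining relation $t_i r = \omega_i(r) t_i + \delta_i(r)$ descends through $\psi$ to $s_i r = \omega_i(r) s_i + \delta_i(r)$ for every $r \in R$, so each $s_i$ is automorphic over $R$ with respect to $(\omega_i,\delta_i)$. Finally, since the images of monomials in the $t_i$ span the quotient ring as a left $R$-module, one gets $S = R[s_1,\dots,s_n]$.

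For the backward implication, assume the $s_i \in S$ are as described. By Proposition \ref{eval2}, there is a ring homomorphism
\[
\phi \colon R[x_1,\dots,x_m;(\omega_1,\delta_1),\dots,(\omega_m,\delta_m)] \lra S
\]
with $\phi(r) = r$ for all $r \in R$ and $\phi(x_i) = s_i$ for all $i$. The hypothesis $S = R[s_1,\dots,s_m]$ tells us that every element of $S$ is a (noncommutative) polynomial expression in the $s_i$ with coefficients from $R$, hence lies in the image of $\phi$; thus $\phi$ is surjective. Setting $I := \ker \phi$, the first isomorphism theorem yields $S \cong R[x_1,\dots,x_m;(\omega_1,\delta_1),\dots,(\omega_m,\delta_m)]/I$, and $I$ is a two-sided ideal. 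The only point that requires a small verification is the condition $I \cap R = \{0\}$: if $r \in R \cap I$, then $0 = \phi(r) = r$, so $r = 0$, which is precisely what is needed.

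I do not anticipate any genuine obstacle; the only fiddly point is verifying the intersection condition $I \cap R = \{0\}$ in the backward direction and observing that it follows immediately from $\phi$ restricting to the identity on $R$. All other steps are formal consequences of Proposition \ref{eval2} and the definitions of automorphically finitely generated and automorphic element.
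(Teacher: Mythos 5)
Your proposal is correct and follows essentially the same route as the paper: one direction via the evaluation homomorphism of Proposition \ref{eval2} together with surjectivity from $S=R[s_1,\dots,s_m]$ and the observation that $\ker\phi\cap R=\{0\}$ since $\phi$ is the identity on $R$, and the other by taking $s_i$ to be the image of $t_i$ in the quotient and checking commutativity and the automorphic relation. The only difference is cosmetic (you present the implications in the opposite order and spell out the kernel-intersection check that the paper labels obvious).
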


\begin{proof}
	First, assume that for $i=1,\dots,m$, $\omega_i$ is an automorphism of $R$, $\delta_i$ is an $\omega_i$-derivation of $R$, and there are commuting elements $s_1,\dots,s_m\in S$ which are automorphic over $R$ with respect to $(\omega_1,\delta_1),\dots,(\omega_m,\delta_m)$ respectively. By Lemma \ref{eval2}, the evaluation map $$\mathrm{Eval}_{(s_1,\dots,s_n)}:  R[x_1,\dots,x_m; (\omega_1,\delta_1),\dots,(\omega_m,\delta_m)] \lra S$$
	in which $\mathrm{Eval}_{(s_1,\dots,s_m)}(f):=f(s_1,\dots,s_m)$ is a ring homomorphism. Since $s_1,\dots,s_m$ are commuting and $S=R[s_1,\dots,s_m]$, the map $\mathrm{Eval}_{(s_1,\dots,s_m)}(f)$ becomes an epimorphism, and thus $$S \cong R[x_1,\dots,x_m; (\omega_1,\delta_1),\dots,(\omega_m,\delta_m)]/\mathrm{Ker}(\mathrm{Eval}_{(s_1,\dots,s_m)})$$ 
	It is obvious that $\mathrm{Ker}(\mathrm{Eval}_{(s_1,\dots,s_m)})\cap R=\{0\}$, which implies that $S$ is  finitely generated over $R$.
	
	Conversely, assume that $S$ is skew finitely generated over $R$, which means $S=R[x_1,\dots,x_m; (\omega_1,\delta_1),\dots,(\omega_m,\delta_m)]/I$, in which $\omega_i\in \mathrm{Aut}(R)$ and $\delta_i$ is an $\omega_i$-derivation of $R$ for all $i=1,\dots,m$. We regard $R$ as a subring of $S$ by identifying $R$ with its image under the natural injection $R\hookrightarrow S$ where $r\mapsto r+I, \forall r\in R$. Let $s_i=x_i+I$ for $i=1,\dots,m$. Since $x_1,\dots, x_m$ are commuting, the elements $s_1,\dots,s_m \in S$ are also commuting. For all $r\in R$, $i=1,\dots,m$, we have $$s_ir=x_ir+I=\omega_i(r)x_i+\delta_i(r)+I=\omega_i(r)s_i+\delta_i(r)$$
	So $s_1,\dots,s_m$ are automorphic over $R$ with respect to $(\omega_1,\delta_1),\dots,(\omega_m,\delta_m)$ respectively. Finally, it is easy to see that $S=R[s_1,\dots,s_m]$.
\end{proof}

\begin{definition}
	Let $R \subseteq S$ be a ring extension. 
	\begin{enumerate}
		\item Commuting elements $s_1,\dots,s_n\in S$ are said to be {\em left algebraically independent} over $R$ if the set 
		$$\big\{ s_1^{i_1}\dots s_n^{i_n} \big\}_{i_1,\dots,i_n\in \mathbb{N}}$$ is left linearly independent over $R$.
		\item $S$ is called left {\em normalizable} over $R$ if there exist commuting elements $v_1,\dots, v_m\in S$ which are left algebraically independent over $R$  such that $S$ is a left finite module over $R[v_1,\dots,v_m]$.
	\end{enumerate}
	
\end{definition}

\begin{lemma}
	Let $R\subseteq S$ be a ring extension. Assume that
    %, for $i=1,\dots,m$, $\omega_i\in \mathrm{Aut}(R)$, $\delta_i$ is $\omega_i$-derivation over $R$, and
         commuting elements $s_1,\dots,s_m\in S$ form an iterated automorphic sequence over $R$ with respect to $(\omega_1,\delta_1),\dots,(\omega_m,\delta_m)$. Then $s_1,\dots,s_m$ are left algebraically dependent over $R$ if and only if there exists a non-zero polynomial $f\in R[x_1;\omega_1,\delta_1]\dots[x_m;\omega_m,\delta_m]$, where $x_1,\dots,x_m$ are commuting, such that $f(s_1,\dots,s_m)=0$.
\end{lemma}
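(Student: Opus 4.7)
The approach is to derive the equivalence directly from the evaluation homomorphism of Proposition \ref{eval1} together with the free left $R$-basis description in Lemma \ref{calculation1}(4). The commutativity of the $s_i$ combined with the iterated-automorphic-sequence relation $s_i s_j = \omega_i(s_j) s_i + \delta_i(s_j)$ forces $\omega_i(s_j)=s_j$ and $\delta_i(s_j)=0$ whenever $j<i$, so the formal variables $x_j$ inside $A := R[x_1;\omega_1,\delta_1]\dots[x_m;\omega_m,\delta_m]$ commute as well, matching the parenthetical clause in the statement (and, via Remark \ref{Convertt}, identifying $A$ with a commuting-variable ring if one prefers).

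First I would apply Proposition \ref{eval1} to obtain the unique ring homomorphism $\phi \colon A \lra S$ satisfying $\phi(x_i)=s_i$ for every $i$ and $\phi|_R = \mathrm{id}_R$. Next, Lemma \ref{calculation1}(4) shows that every $f\in A$ has a unique expression $f=\sum_{I} r_I x_1^{i_1}\cdots x_m^{i_m}$ with $r_I\in R$ and only finitely many of them non-zero, and that $f\ne 0$ if and only if at least one $r_I$ is non-zero. Applying $\phi$ termwise, one gets $f(s_1,\dots,s_m) = \phi(f) = \sum_I r_I s_1^{i_1}\cdots s_m^{i_m}$.

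The equivalence then follows by unwinding definitions. Indeed, left algebraic dependence of $s_1,\dots,s_m$ over $R$ asserts the existence of scalars $r_I\in R$, not all zero, with $\sum_I r_I s_1^{i_1}\cdots s_m^{i_m}=0$, and this is precisely the assertion that the corresponding non-zero polynomial $f=\sum_I r_I x_1^{i_1}\cdots x_m^{i_m}\in A$ lies in $\mathrm{Ker}(\phi)$, i.e.\ satisfies $f(s_1,\dots,s_m)=0$. Since the same tuple of coefficients witnesses both formulations, both implications are transparent. I do not foresee a genuine obstacle; the only bookkeeping to watch is translating between the iterated generalized skew polynomial presentation and the commuting-variable one, which is handled by Remark \ref{Convertt}.
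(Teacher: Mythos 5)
Your argument is correct and is precisely the ``careful inspection'' that the paper's one-line proof leaves to the reader: the evaluation homomorphism of Propositions \ref{eval1}/\ref{eval2} together with the freeness of the monomial basis from Lemma \ref{calculation1}(4) translate left linear dependence of the monomials $s_1^{i_1}\cdots s_m^{i_m}$ over $R$ into a non-zero element of the kernel of the evaluation map, and conversely, with the same tuple of coefficients witnessing both sides. One caveat: your opening assertion that commutativity of the $s_i$ \emph{forces} $\omega_i(s_j)=s_j$ and $\delta_i(s_j)=0$ for $j<i$ does not follow as stated. Combining $s_is_j=s_js_i$ with $s_is_j=\omega_i(s_j)s_i+\delta_i(s_j)$ only yields $\bigl(s_j-\omega_i(s_j)\bigr)s_i=\delta_i(s_j)$, and since $R_{i-1}$ need not be a division ring and $1,s_i$ need not be left linearly independent over it (indeed, such independence is exactly what the lemma is probing), you cannot conclude that both sides vanish. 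Fortunately the claim is not needed: well-definedness of the evaluation map on the commuting-variable ring (via Remark \ref{Convertt} and Proposition \ref{eval2}) requires only that the defining relations $x_ix_j=x_jx_i$ and $x_ir=\omega_i(r)x_i+\delta_i(r)$ remain valid after substituting $s_i$ for $x_i$, and both hold directly by hypothesis. If you delete that sentence and invoke Proposition \ref{eval2} instead, the proof is complete.
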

\begin{proof}
Suppose that commuting elements $s_1,\dots,s_m\in S$ form an iterated automorphic sequence over $R$ with respect to $(\omega_1,\delta_1),\dots,(\omega_m,\delta_m)$. By Proposition \ref{eval1}, there exists a unique homomorphism $$\phi:R[x_1;\omega_1,\delta_1]\dots[x_n;\omega_n,\delta_n]\lra S$$ such that $\phi(x_i)=s_i$ for all $1\le i\le n$ and $\phi(r)=r$ for all $r\in R$.

If $s_1,\dots,s_m\in S$ are left algebraically dependent over $R$, then there is a finite set of  non-zero elements $A=\{a_{i_1,\dots,i_n}\in R, \text{ where } (i_1,\dots,i_n) \in  \mathbb{N}^n\}$ such that 
$$\sum_{a_{i_1,\dots,i_n}\in A}a_{i_1,\dots,i_n}s_1^{i_1}\dots s_n^{i_n}=0.$$
By denoting  $$f:=\displaystyle\sum_{a_{i_1,\dots,i_n}\in A}a_{i_1,\dots,i_n}x_1^{i_1}\dots x_n^{i_n}\in R[x_1;\omega_1,\delta_1]\dots[x_n;\omega_n,\delta_n],$$ we have that $f(s_1,\dots,s_n)=\phi(f)=0$. 

Conversely, assume that $f$ is a non-zero polynomial in $R[x_1;\omega_1,\delta_1]\dots[x_n;\omega_n,\delta_n]$ such that $f(s_1,\dots,s_n)=0.$ We can rewrite $f$ as
       $$\displaystyle f(x_1,\dots,x_n)=\sum_{I=(i_1,\dots,i_n)\in   \mathbb{N}^n}b_I x_1^{i_1}\dots x_n^{i_n},\ 0\ne b_I\in R.$$
 By applying the homomorphism $\phi$, we obtain that $$\phi(f)=\sum_{I=(i_1,\dots,i_n)\in \mathbb{N}^n}b_I s_1^{i_1}\dots s_n^{i_n}=f(s_1,\dots,s_n)=0.$$
Hence, $s_1,\dots,s_m\in S$ are left algebraically dependent over $R$.

\end{proof}

\begin{definition}\label{fngt}
Let $S$ be an extension ring of a division ring $D$. We say that $S$ is {\em skew normalizable} over $D$ if there exist, for $i=1,\dots,n$, an automorphism $\omega_i$ of $D$ and an $\omega_i$-derivation $\delta_i$ of $D$, and there exist commuting automorphic elements $s_1,\dots,s_n$  with respect to $(\omega_1,\delta_1),\dots,(\omega_n,\delta_n)$ respectively such that:
\begin{enumerate}
	\item $s_1,\dots,s_n$ are left algebraically independent over $D$.
	\item $S$ is finite as a left module over its subring $D[s_1,\dots,s_n]$.
\end{enumerate}
\end{definition}
\begin{remark}
In Definition \ref{fngt}, since $s_1,...,s_n$ are left algebraically independent over $D$, the kernel of the ring homomorphism $\phi:D[x_1,\dots,x_n; (\omega_1,\delta_1),\dots,(\omega_n,\delta_n)] \lra D[s_1,\dots,s_n]$, obtained from Proposition \ref{eval2}, is zero. So  $\phi$ is injective. On the other hand, it is obvious that $\phi$ is surjective. Thus,  the subring $D[s_1,\dots,s_n]$ of $S$ is isomorphic to the skew polynomial ring $D[x_1,\dots, x_n; (\omega_1,\delta_1),\dots,(\omega_n,\delta_n)]$. It follows that Definition \ref{fngt} and Definition \ref{def1} are equivalent.
\end{remark}

\begin{example}
    Let $\sigma\in\mathrm{Aut}(D)$. If $\sigma$ is an inner automorphism of $D$, then the ring $S=D[t,t^{-1};(\sigma,\delta),(\sigma^{-1},0)]$ is skew normalizable over $D$.
\end{example}
\begin{proof}
    Assume $\sigma(a)=cac^{-1}$ for all $a\in D$. Let $u=t+c^{2}t^{-1}\in S$. For each $a\in D$, we have
    \begin{align*}
        ua&=ta+c^2t^{-1}a=\sigma(a)t+\delta(a)+c^2\sigma^{-1}(a)t^{-1}=cac^{-1}t+c^2(c^{-1}ac)t^{-1}+\delta(a)\\
        &=(cac^{-1})(t+c^2t^{-1})+\delta(a)=\sigma(a)u+\delta(a).
    \end{align*}
Thus $u$ is automorphic over $D$ with respect to $(\sigma,\delta)$. Now we have $ut=t^2+c^2$, or $t^2-ut+c^2=0$. So $t$ is also left integral over $D[u;(\sigma,\delta)]$. We also have $ut^{-1}=1+c^2(t^{-1})^2$, or $(t^{-1})^2-c^{-2}ut^{-1}+1=0$. This means that $t^{-1}$ is left integral over $D$. Hence $S$ is finite over $D[u;(\sigma,\delta)]$, we conclude that $S$ is skew normalizable over $D$.
\end{proof}

%We recall the definition of ``skew normalizable".
\begin{definition}
	Let $D$ be a division ring. A tuple $(\omega_1,\delta_1;\dots;\omega_m,\delta_m)$, where $\omega_i\in \Aut(D)$ and $\delta_i$ is an $\omega_i$-derivation for $i = 1, \ldots, m$, is said to be {\em skew normalizable} over $D$ if every skew finitely generated extension of $D$ with respect to $(\omega_1,\delta_1),\dots,(\omega_m,\delta_m)$ is skew normalizable over $D$.
\end{definition}

Let $\omega$ be an automorphism of a ring $R$. We denote by $R_\omega$ the fixed subring of $R$ by $\omega$, i.e., $R_\omega=\{r\in R: \omega(r)=r\}$.
\begin{lemma}\label{mixderivation}
	Let $R$ be a ring and let $\omega\in\Aut(R)$. Suppose that $\delta_1,\dots,\delta_n$ are $\omega$-derivations of $R$ satisfying $\omega\circ\delta_i=\delta_i\circ\omega$ and $\delta_{i}\circ\delta_j=\delta_j\circ\delta_i$ for all $1\le i,j\le n$. Let $F=Z(R)\cap R_\omega\cap \mathrm{Ker}(\delta_1)\cap\cdots\cap\mathrm{Ker}(\delta_n)$, and $a_1,\dots,a_n\in F$. Assume that $s_1,\dots,s_n$ are commuting automorphic elements with respect to $(\omega,\delta_1),\dots,(\omega,\delta_n)$ respectively. Let $d_1=\delta_1+a_1\delta_n,\dots,d_{n-1}=\delta_{n-1}+a_{n-1}\delta_n,d_n=\delta_n$, and $u_1=s_1+a_1s_n,\dots,u_{n-1}=s_{n-1}+a_{n-1}s_n, u_n=s_n$. Then:
	\begin{enumerate}
{
\item $d_i$ is an $\omega$-derivation for each $1\leq i \leq n$.
}
		\item $\omega\circ d_i=d_i\circ\omega$ and $d_{i}\circ d_j=d_j\circ d_i$ for all $1\le i,j\le n$.
		\item $u_1,\dots,u_n$ are commuting automorphic over $R$ with respect to $(\omega,d_1),\dots,(\omega,d_n)$ respectively.
{
\item $F$ is contained in $Z(R)\cap R_\omega\cap \mathrm{Ker}(d_1)\cap\cdots\cap\mathrm{Ker}(d_n)$.
}
	\end{enumerate}
\end{lemma}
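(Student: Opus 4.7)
The plan is a direct, term-by-term verification using the hypotheses defining $F$. Throughout, I will use $a_i \in F$ in four guises: $a_i$ is central in $R$; $a_i$ is fixed by $\omega$; $a_i$ is annihilated by every $\delta_j$; and, as a consequence of the last two, $a_i$ commutes with every $s_k$, since $s_k a_i = \omega(a_i) s_k + \delta_k(a_i) = a_i s_k$.

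For part (1), the identity $\omega \circ d_i = d_i \circ \omega$ follows by applying $\omega$ to $d_i(r) = \delta_i(r) + a_i \delta_n(r)$ and invoking $\omega(a_i) = a_i$ together with $\omega \delta_i = \delta_i \omega$ and $\omega \delta_n = \delta_n \omega$. The non-trivial case of $d_i \circ d_j = d_j \circ d_i$ is $i, j < n$; expanding $d_i d_j(r)$, the cross term $\delta_i(a_j \delta_n(r))$ collapses to $a_j \delta_i \delta_n(r)$ via the $\omega$-derivation rule applied with $a_j \in R_\omega \cap \mathrm{Ker}(\delta_i)$, and analogously for the other mixed terms. One obtains
\[
d_i d_j(r) = \delta_i \delta_j(r) + a_j \delta_i \delta_n(r) + a_i \delta_n \delta_j(r) + a_i a_j \delta_n^2(r),
\]
which is symmetric in $i, j$ thanks to $\delta_i \delta_j = \delta_j \delta_i$. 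Essentially the same computation, or the observation that $d_i$ is a sum of $\omega$-derivations with central coefficient $a_i$, also shows that each $d_i$ is itself an $\omega$-derivation, so the pair $(\omega, d_i)$ is a legitimate target datum for part (2).

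For part (2), to see that $u_i$ and $u_j$ commute, expand $u_i u_j = (s_i + a_i s_n)(s_j + a_j s_n)$ and push each $a_k$ to the left using $s_\ell a_k = a_k s_\ell$. The result is a polynomial in the $s_\ell$'s with coefficients in $\{1, a_i, a_j, a_i a_j\}$, and symmetry in $i, j$ then follows from the commutativity of the family $s_1, \dots, s_n$. For the automorphic identity at index $i < n$, I compute
\begin{align*}
u_i r &= s_i r + a_i s_n r = \omega(r) s_i + \delta_i(r) + a_i \omega(r) s_n + a_i \delta_n(r) \\
&= \omega(r)(s_i + a_i s_n) + (\delta_i(r) + a_i \delta_n(r)) = \omega(r) u_i + d_i(r),
\end{align*}
where centrality of $a_i$ in $R$ is used to slide $a_i$ past $\omega(r)$. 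The case $i = n$ is immediate since $u_n = s_n$ and $d_n = \delta_n$.

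I expect no serious obstacle; the only bookkeeping is to justify, each time an $a_k$ is moved past an $\omega$, a $\delta_\ell$, or an $s_\ell$, by citing the corresponding defining property of $F$.
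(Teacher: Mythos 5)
Your proof is correct and follows essentially the same direct, term-by-term verification as the paper's own argument, using the defining properties of $F$ in exactly the same places. The only (welcome) addition is that you make explicit two points the paper leaves implicit: that $a_j\in R_\omega\cap\Kern(\delta_i)$ is what lets $\delta_i(a_j\,\cdot)=a_j\delta_i(\cdot)$ and $s_\ell a_k=a_k s_\ell$, and that each $d_i$ is itself an $\omega$-derivation.
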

\begin{proof}
{
(1) Fix $1  \leq i \leq n-1$. For $a, b \in R$, we have
\begin{align*}
 d_i(ab) & = (\delta_i + a_i \delta_n)(ab)\\
 & = \delta_i(ab) + a_i \delta_n(ab)\\
 & = \omega(a) \delta_i(b) + \delta_i(a)b + a_i \omega(a) \delta_n(b) + a_i\delta_n(a)b \\
 & = [\omega(a) \delta_i(b)  + a_i \omega(a) \delta_n(b)] + [\delta_i(a)b + a_i\delta_n(a)b] \\
 & = \omega(a) [\delta_i(b)  + a_i \delta_n(b)] + [\delta_i(a) + a_i\delta_n(a)]b \\
  & = \omega(a) d_i(b) + d_i(a) b. 
\end{align*}
Hence, $d_i$ is an $\omega$-derivation.
}

(2) We have \begin{align*}
			\omega\circ d_i&=\omega\circ (\delta_i+a_i\delta_n)\\
			&=\omega\circ \delta_{i}+a_i\omega\circ \delta_{n}\\
			&=\delta_{i}\circ \omega+a_i\delta_n\circ\omega\\
			&=(\delta_i+a_i\delta_n)\circ\omega\\
			&=d_i\circ\omega.			
		\end{align*}
We also have
	\begin{align*}
		d_i\circ d_j&= (\delta_i+a_i\delta_n)\circ(\delta_j+a_j\delta_n)\\
		&=\delta_i\circ \delta_{j}+\delta_i\circ (a_j\delta_{n})+a_i\delta_n\circ\delta_j+a_i\delta_n\circ(a_j\delta_n)\\ &=\delta_i\circ\delta_j+a_j\delta_i\circ\delta_n+a_i\delta_j\circ\delta_n+a_ia_j\delta_n^2
	\end{align*}
    and
     \begin{align*}
     	d_j\circ d_i&=(\delta_j+a_j\delta_n)\circ(\delta_i+a_i\delta_n)\\
     		&=\delta_j\circ \delta_{i}+\delta_j\circ(a_i\delta_{n})+a_j\delta_n\circ\delta_i+a_j\delta_n\circ(a_i\delta_n)\\
     		&=\delta_j\circ\delta_i+a_i\delta_j\circ\delta_n+a_j\delta_i\circ\delta_n+a_ja_i\delta_n^2.
     \end{align*}
 So $d_i\circ d_j=d_j \circ d_i$.

\medskip

(3) We have 
     \begin{align*}
     	u_iu_j&=(s_i+a_is_n)(s_j+a_js_n)=s_is_j+a_js_is_n+a_is_ns_j+a_ia_js_ns_n\\
     	u_ju_i&=(s_j+a_js_n)(s_i+a_is_n)=s_js_i+a_is_js_n+a_js_ns_i+a_ja_is_ns_n
     \end{align*}
 and thus $u_iu_j=u_ju_i$. For all $r\in R$, we have for each $i = 1,\ldots,n-1$,
 \begin{align*}
u_ir&=s_ir+a_is_nr=\omega(r)s_i+\delta_i(r)+a_i\omega(r)s_n+a_i\delta_n(r)\\
 	 &=\omega(r)(s_i+a_is_n)+(\delta_i+a_i\delta_n)(r)\\
 	 &=\omega(r)u_i+d_i(r).
 \end{align*}
Also, $u_n r = \omega(r) u_n + \delta_n(r) = \omega(r) u_n + d_n(r).$
Therefore, $u_i$ is automorphic over $R$ with respect to $(\omega,d_i)$.

{
(4) It is obvious since $\mathrm{Ker}(\delta_1)\cap\cdots\cap\mathrm{Ker}(\delta_n) \subseteq \mathrm{Ker}(d_1)\cap\cdots\cap\mathrm{Ker}(d_n)$.
}
\end{proof}

\begin{example}
	Let $R=k[x_1,\dots,x_n]$ be the ring of polynomials over field $k$. Let $\omega$ be the identity map of $R$, choose $\delta_1=\frac{\partial}{\partial x_1}, \dots, \delta_n=\frac{\partial}{\partial x_n}$. It is easy to check that each $\delta_i$ is an $\omega$-derivation of $R$. Moreover, $\omega\circ\delta_i=\delta_i\circ\omega$ and $\delta_{i}\circ\delta_j=\delta_j\circ\delta_i$ for all $1\le i,j\le n$, and $F=Z(R)\cap R_\omega\cap \mathrm{Ker}(\delta_1)\cap\cdots\cap\mathrm{Ker}(\delta_n)=k$.
\end{example}

\begin{example}
Let $R=k[x_1,\dots,x_n]$ be the ring of polynomials over a field $k$ of characteristic 0. Let $e_1,\dots,e_n$ be non-zero elements in $k$. Let $\omega_1,\dots,\omega_n$ be automorphisms of $R$ defined by $\omega_i(x_i)=e_ix_i$ and $\omega_i(x_j)=x_j$ if $j\ne i$. For each $i=1,\dots,n$, assume $e_i\ne 1$, we define $\omega_i$-derivation as $\delta_i(f)=\dfrac{\omega_i(f)-f}{\omega_i(x_i)-x_i}$ for all $f\in R$. Then it is obvious that $\delta_i$ is an $\omega_i$-derivation. Moreover, $\omega_i$ commutes $\delta_j$, $\omega_i$ commutes  $\omega_j$,  $\delta_i$ commutes $\delta_j$ for $i\ne j$, and $\mathrm{Ker}(\delta_1)=\dots=\mathrm{Ker}(\delta_n)=R_{\omega_1}=\dots=R_{\omega_n}=k$. 
\end{example}

\begin{lemma}[Gordon-Motzkin  \cite{Gordon}]\label{gordon}
	Let $D$ be a division ring and $f(x)=a_nx^n+\dots+a_1x+a_0$ be a polynomial with left coefficients $a_0,\dots,a_n$ in $D$. Then at most $n$ conjugacy classes of $D$ contain roots of $f(x)$. 
\end{lemma}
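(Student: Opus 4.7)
The plan is to induct on the degree $n$ of $f(x)$. The cases $n = 0$ and $n = 1$ are immediate: a nonzero constant has no roots, and $a_1 x + a_0$ with $a_1 \neq 0$ has exactly one root, which lies in a single conjugacy class. For the inductive step, if $f$ has no roots in $D$ we are done; otherwise fix a root $a \in D$ of $f$. Applying the right-division algorithm in $D[x]$ (an ordinary polynomial ring where $x$ is central) gives $f(x) = g(x)(x - a) + r$ with $r \in D$. Because $x$ commutes with constants, substituting $x = a$ on the right of every monomial yields $r = f(a) = 0$, so $f(x) = g(x)(x - a)$ with $\deg g = n - 1$.

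The main step is to establish that for every $b \in D$ with $b \neq a$,
\[
f(b) \;=\; g(b^{a})\,(b - a), \qquad \text{where} \quad b^{a} := (b - a)\,b\,(b - a)^{-1}.
\]
To verify this, write $g(x) = \sum_{i} c_i x^{i}$. A short induction on $i$, starting from $b^{a}(b - a) = (b - a) b$, gives $(b^{a})^{i}(b - a) = (b - a)\, b^{i}$, hence
\[
g(b^{a})(b - a) \;=\; \sum_{i} c_i (b - a) b^{i} \;=\; \sum_{i} c_i b^{i+1} - \sum_{i} c_i a\, b^{i} \;=\; f(b),
\]
where the final equality comes from expanding $g(x)(x - a) = \sum c_i x^{i+1} - \sum c_i a\, x^{i}$ (using the centrality of $x$ in $D[x]$, which yields $x^{i} a = a x^{i}$) and then substituting $x = b$.

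Since $D$ is a division ring and $b \neq a$, the factor $b - a$ is invertible, so $f(b) = 0$ forces $g(b^{a}) = 0$. As $b^{a}$ is conjugate to $b$, every root $b \neq a$ of $f$ lies in the conjugacy class of some root of $g$. By the inductive hypothesis, the roots of $g$ occupy at most $n - 1$ conjugacy classes; adjoining the class of $a$ shows that the roots of $f$ occupy at most $n$ conjugacy classes, completing the induction. The only delicate point in the whole argument is discovering the correct conjugate $b^{a} = (b-a) b (b-a)^{-1}$ for which the identity $f(b) = g(b^{a})(b - a)$ holds; once that identity is in hand, the induction is mechanical.
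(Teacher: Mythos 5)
Your proof is correct, and it is precisely the classical Gordon--Motzkin argument (right-dividing by $x-a$ and using the product formula $f(b)=g\bigl((b-a)b(b-a)^{-1}\bigr)(b-a)$ to push roots of $f$ into conjugacy classes of roots of the degree-$(n-1)$ quotient). The paper itself gives no proof and simply cites \cite{Gordon}, and your argument is exactly the one that citation refers to, so there is nothing to reconcile.
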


{
 Note that if $f(t_1,\dots,t_n)$ is a  polynomial in the polynomial ring $D[t_1;,\dots,t_n]$, then $f$ may not be evaluated at an arbitrary element $(a_1,\dots,a_n)\in S^n$, where $S$ is a ring extension of $D$. However, by requiring $f(t_1,\dots,t_n)$ to be written in a ``standard" form, we can define the value of $f(t_1,\dots,t_n)$ at $(a_1,\dots,a_n)$   as follows.
 
 \begin{definition}\label{Eval3}
   Let $f(t_1,\dots,t_n)$ be a polynomial in $D[t_1,\dots,t_n]$, and let $S$ be a ring extension of $D$. For each  $ (a_1,\dots,a_n)\in S^n$, we define the value of $f(t_1,\dots,t_n)$ at $(a_1,\dots,a_n)$, denoted by $f(a_1,\dots,a_n)$, as follows: first we write $f(t_1,\dots,t_n)$ in the standard form
       $$\displaystyle f(t_1,\dots,t_n)=\sum_{I=(i_1,\dots,i_n)\in \mathbb{N}^n}b_I t_1^{i_1}\dots t_n^{i_n};$$
       then we set 
$$f(a_1,\dots,a_n):= \sum_{I=(i_1,\dots,i_n)\in \mathbb{N}^n}b_I a_1^{i_1}\dots a_n^{i_n}\in S.$$
 \end{definition}
 
\begin{remark}
    The evaluation at a point $(a_1,\dots,a_n)\in S^n$ in Definition \ref{Eval3} is a map from $D[t_1\dots,t_n]$ to $S$. But it is not necessary a ring homomorphism. However, from now on, we will use this notation without requiring it to be a ring homomorphism. %When  $\omega_1,\dots,\omega_n$ are identity automorphisms and  $\delta_1,\dots,\delta_n$ are zero-derivatives, we have $D[t_1;\omega_1,\delta_1]\dots[t_n;\omega_n,\delta_n]=D[t_1,\dots,t_n]$.   Then,   the evaluation at a point $(a_1,\dots,a_n)\in S^n$  is a map from $D[t_1,\dots,t_n]$ to $S$.

\end{remark}
}

In the following Theorem, we present a new version of Combinatorial Nullstellensatz over division rings II, which was introduced in \cite[Theorem 3.2]{Paran1} by Elad Paran. Our proof is different from the one in \cite{Paran1}. As a consequence, we derive Corollary \ref{ComNulstellensatz2} which is analogous to \cite[Lemma 3.1]{Thieu1}.

\begin{theorem}[Combinatorial Nullstellensatz over division rings III]\label{ComNulstellensatz} Let $D$ be a division ring and $$\displaystyle f(t_1,\dots,t_n)=\sum_{I=(i_1,\dots,i_n)\in \mathbb{N}^n}b_I t_1^{i_1}\dots t_n^{i_n}$$ be a  polynomial of degree $m>0$ in $D[t_1\dots,t_n]$.
Let $A_1,\dots, A_n$ be subsets of $D$ such that the elements of each $A_i$ are {pairwise non-conjugate} and $|A_i|>m$ for $i=1,\dots,n$. Then there exists $(a_1,\dots,a_n)\in A_1\times\dots\times A_n$ such that $$f(a_1,\dots,a_n)= \sum_{I=(i_1,\dots,i_n)\in \mathbb{N}^n}b_I a_1^{i_1}\dots a_n^{i_n}\ne 0$$
\end{theorem}

\begin{proof}
The case $n=1$ holds true due to Lemma \ref{gordon}. Now consider $n>1$, suppose that $f(a_1, \ldots, a_n) =0$ for all $(a_1, \ldots, a_n) \in A_1 \times \cdots \times A_n.$
For any fixed $a_1\in A_1$, let  $$h(t_2,\dots,t_n)=f(a_1,t_2,\dots,t_n)=\sum_{I=(i_1,\dots,i_n)\in \mathbb{N}^n}b_I a_1^{i_1}t_2^{i_2}\dots t_n^{i_n}$$
Then $h(a_2,\dots,a_n)=f(a_1,a_2,\dots,a_n)=0$ for all $(a_2,\dots,a_n)\in A_2\times\dots\times A_n$. This implies, by induction, that $h(t_2,\dots,t_n)=0$.
Hence, $$f(a_1,t_2,\dots,t_n)=0,\ \ \forall a_1\in A.$$
By regarding $t_1$ as constant, we can consider $f$ as a polynomial in $D[t_1][t_2,\dots,t_n]$ and rewrite
$$\displaystyle f(t_1,\dots,t_n)=\sum_{J=(j_2,\dots,j_n) 
\in \mathbb{N}^{n-1}} f_J(t_1)t_2^{j_2}\dots t_n^{j_n}$$
where $f_J(t_1)\in D[t_1]$ and $\mathrm{deg}(f_J)\le m$. Now, for all $a_1\in A_1$, we have
$$0=f(a_1,t_2,\dots,t_n)=\sum_{J=(j_2,\dots,j_n)\in \mathbb{N}^{n-1}} f_J(a_1)t_2^{j_2}\dots t_n^{j_n}$$
So $f_J(a_1)=0$ for all $a_1\in A_1$. But $|A_1|>\mathrm{deg}(f_J)$, so by Lemma \ref{gordon} we obtain $f_J=0$, thus $f=0$, a contradiction.
\end{proof}

\begin{corollary}\label{ComNulstellensatz2}
	Let $D$ be a division ring and  $$\displaystyle f(t_1,\dots,t_n)=\sum_{I=(i_1,\dots,i_n)\in \mathbb{N}^n}b_I t_1^{i_1}\dots t_n^{i_n}$$ be a  polynomial of degree $m>0$ in $D[t_1,\dots,t_n]$.
	Let $A_1,\dots, A_n$ be subsets of the center $F=Z(D)$ such that $|A_i|>m$ for $i=1,\dots,m$. Then there exist $(a_1,\dots,a_n)\in A_1\times\dots\times A_n$ such that { $$f(a_1,\dots,a_n)=\sum_{I=(i_1,\dots,i_n)\in \mathbb{N}^n}b_I a_1^{i_1}\dots a_n^{i_n}\ne 0$$.}
\end{corollary}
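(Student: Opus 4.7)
The plan is to derive the corollary directly from Theorem \ref{ComNulstellensatz} by checking that subsets of the center $F = Z(D)$ automatically satisfy the non-conjugacy hypothesis appearing in that theorem. The only work, therefore, is to verify this compatibility condition; everything else is a formal invocation of the already-proved statement.

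The key observation I would record first is the following. For any $a \in Z(D)$ and any $x \in D^{\times}$, we have $xax^{-1} = a$, so the conjugacy class of $a$ in $D$ reduces to the singleton $\{a\}$. Consequently, if $A_i \subseteq F = Z(D)$ is any subset, then any two distinct elements of $A_i$ lie in distinct conjugacy classes; that is, $A_i$ is a set of pairwise non-conjugate elements of $D$ in the sense required by Theorem \ref{ComNulstellensatz}.

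With this remark in place, the argument is essentially a one-line citation. Given the skew polynomial $f(t_1,\dots,t_n) \in D[t_1;\omega_1,\delta_1]\dots[t_n;\omega_n,\delta_n]$ of degree $m>0$ and subsets $A_1,\dots,A_n \subseteq F$ with $|A_i| > m$, I would simply apply Theorem \ref{ComNulstellensatz} to the same data $(f, A_1, \dots, A_n)$: the hypotheses on the cardinalities are identical, and the non-conjugacy hypothesis is supplied by the remark above. The conclusion of Theorem \ref{ComNulstellensatz} then yields a tuple $(a_1,\dots,a_n) \in A_1 \times \cdots \times A_n$ with $f(a_1,\dots,a_n) \neq 0$, which is precisely the conclusion of the corollary.

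There is no real obstacle here; the corollary is a specialization rather than a genuine generalization. If one wanted, one could note the minor typo ``$i = 1,\dots,m$'' in the statement — the intended range is clearly $i = 1,\dots,n$ — but this does not affect the proof. In fact the corollary is strictly weaker than the theorem, so the only conceptual point worth highlighting in the write-up is exactly why a central subset qualifies as a non-conjugate one; beyond that, the proof reduces to the single sentence ``apply Theorem \ref{ComNulstellensatz}''.
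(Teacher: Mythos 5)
Your proposal is correct and matches the paper's own proof: both reduce the corollary to Theorem \ref{ComNulstellensatz} by observing that central elements have singleton conjugacy classes, so any subset of $Z(D)$ consists of pairwise non-conjugate elements. Nothing further is needed.
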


\begin{proof}
	Assume that $a,b$ are two conjugate elements in $A_i$, then $a=xbx^{-1}$ for some $ x\in D$, hence $a=xx^{-1}b=b$, this means $A_i$ consists of non-conjugate elements. So, Theorem \ref{ComNulstellensatz} can be applied.
\end{proof}

\begin{lemma}\label{transivity}
	Let $R\subseteq S\subseteq T$ be a sequence of ring extensions. Suppose that $S$ is skew normalizable over $R$ and that $T$ is a left finite module over $S$. Then $T$ is also skew normalizable over $R$.
\end{lemma}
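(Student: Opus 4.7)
The plan is to use the normalizing data of $S$ over $R$ directly as normalizing data for $T$ over $R$, since the only extra ingredient we need is transitivity of the left finite-module property.

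First I would unpack the hypothesis that $S$ is automorphically normalizable over $R$: there exist automorphisms $\omega_1,\dots,\omega_n\in\Aut(R)$, $\omega_i$-derivations $\delta_i$ over $R$, and commuting elements $s_1,\dots,s_n\in S$ which are automorphic with respect to $(\omega_1,\delta_1),\dots,(\omega_n,\delta_n)$, such that $s_1,\dots,s_n$ are left algebraically independent over $R$ and $S$ is a left finite module over $R[s_1,\dots,s_n]$. I would then view $s_1,\dots,s_n$ as elements of $T$ via the inclusion $S\subseteq T$; the relations $s_i r=\omega_i(r)s_i+\delta_i(r)$ for $r\in R$ and $s_is_j=s_js_i$ are preserved by the inclusion, so these elements remain commuting automorphic over $R$ with respect to the same $(\omega_i,\delta_i)$. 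Moreover, left algebraic independence of $s_1,\dots,s_n$ over $R$ is a statement about the vanishing of $R$-linear combinations of the monomials $s_1^{i_1}\dots s_n^{i_n}$, and any such vanishing in $T$ would already be a vanishing in $S$; hence the $s_i$ remain left algebraically independent over $R$ when regarded in $T$.

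Next I would verify that $T$ is a left finite module over the subring $R[s_1,\dots,s_n]$. Write $S=\sum_{i=1}^{p} R[s_1,\dots,s_n]\,u_i$ and $T=\sum_{j=1}^{q} S\,v_j$ for finite generating sets $\{u_i\}\subseteq S$ and $\{v_j\}\subseteq T$. Given $t\in T$, expand $t=\sum_j s^{(j)} v_j$ with $s^{(j)}\in S$, and then each $s^{(j)}=\sum_i r_{ij}\,u_i$ with $r_{ij}\in R[s_1,\dots,s_n]$, so that
\[
t=\sum_{i,j} r_{ij}\,u_i v_j.
\]
Thus the finite set $\{u_i v_j:1\le i\le p,\ 1\le j\le q\}$ generates $T$ as a left $R[s_1,\dots,s_n]$-module.

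Combining the two observations, the tuple $(s_1,\dots,s_n)$ witnesses that $T$ is automorphically normalizable over $R$, completing the proof. There is no genuine obstacle here; the only point that deserves care is noting that ``commuting automorphic'' and ``left algebraically independent'' are statements that pass from $S$ to any overring, and that the transitivity of being a left finite module works on the left (so the same argument that proves transitivity in the commutative setting goes through, just with the order of scalars on the left).
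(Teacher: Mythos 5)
Your proof is correct. The paper gives no argument of its own here --- it simply defers to the proof of \cite[Lemma 3.6]{Thieu1} --- and your argument (reuse the normalizing elements $s_1,\dots,s_n$ of $S$, observe that being commuting automorphic over $R$ and left algebraically independent over $R$ are preserved under passage to the overring $T$ because $S$ is a subring of $T$, and conclude by the transitivity of left finite generation via the products $u_iv_j$) is precisely the standard argument that citation stands in for.
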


\begin{proof}
  The result follows by repeating the arguments in the proof of \cite[Lemma 3.6]{Thieu1}.
\end{proof}

\begin{theorem}\label{main1}
	Let $D$ be a division ring, $\omega$ be an automophism of $D$,  and $\delta_1,\dots,\delta_n$ be $\omega$-derivations of $D$ such that $\omega\circ\delta_i=\delta_i\circ\omega$ and $\delta_i\circ\delta_j=\delta_j\circ\delta_i$ for all $1\le i,j\le n$. Assume that $F=Z(D)\cap D_{\omega}\cap \mathrm{Ker}(\delta_1)\cap\cdots\cap \mathrm{Ker}(\delta_n)$ is an infinite set. Then the tuple $(\omega,\delta_1;\dots;\omega,\delta_n)$ is skew normalizable.
\end{theorem}

{
\begin{proof}
	Let $S$ be an arbitrary skew finitely generated extension of $D$. By Proposition \ref{prop3.2}, we write $S=D[x_1,\dots,x_n;(\omega,\delta_1),\dots,(\omega_,\delta_n) ]$. We will prove by induction on $n$ that $S$ is skew normalizable over $D$. 
	If $x_1,\dots, x_n$ are left  algebraically independent over $D$, then $S$ is skew normalizable. Now assume that $x_1,\dots,x_n$ are left  algebraically dependent over $D$, i.e. there is a non-zero polynomial in the skew polynomial ring $D[y_1,\dots, y_n;(\omega,\delta_1),\dots,(\omega_,\delta_n)]$ of the standard form $$f=\sum_{I=(i_1,\dots,i_n)\in \mathbb{N}^n}b_I y_1^{i_1}\dots y_n^{i_n},\ \ \ b_I\in D$$ such that $$f(x_1,\dots,x_n)=\sum_{I=(i_1,\dots,i_n)\in \mathbb{N}^n}b_I x_1^{i_1}\dots x_n^{i_n}=0.$$
For any $c_1,\dots,c_{n-1}\in F$, we rewrite 
\begin{align*}
    f(x_1,\dots,x_n)&=\sum_{I=(i_1,\dots,i_n)\in \mathbb{N}^n}b_I x_1^{i_1}\dots x_n^{i_n}\\
    &=\sum_{I=(i_1,\dots,i_n)\in \mathbb{N}^n}b_I (t_1+c_1t_n)^{i_1}\dots(t_{n-1}+c_{n-1}t_n) t_n^{i_n},
\end{align*}
where $t_1=x_1-c_{1}x_n,\dots,t_{n-1}=x_{n-1}-c_{n-1}x_n$, and $t_n=x_n$.
By our assumption, it is easy to check that $c_1,\dots,c_{n-1}$, $t_1,\dots,t_n$ commute with each other. Thus, for $k=1,\dots,n-1$, we have 
	\begin{align*}
		(t_k+c_kt_n)^{i_k}&=(c_kt_n)^{i_k}+\sum_{j=0}^{i_k-1}\binom{i_k}{j}t_k^{i_k-j}(c_kt_n)^j \\	
		&= c_k^{i_k}t_n^{i_k}+\sum_{j=0}^{i_k-1}u_{k,j}t_n^j,
	\end{align*}
    where $u_{k,j}=\binom{i_k}{j}c_k^jt_k^{i_k-j}\in F[t_k]$.
Hence,
\begin{align*}
	f(x_1,\dots,x_n)&=\sum_{\substack{I=(i_1,\dots,i_n)\in \mathbb{N}^n\\i_1+\dots+i_n\le N}}b_I\bigg(\prod_{k=1}^{n-1} (t_k+c_kt_n)^{i_k}\bigg) t_n^{i_n}, \ \ \text{ where } N=\mathrm{deg}(f)\\
	&=\sum_{\substack{I=(i_1,\dots,i_n)\in \mathbb{N}^n\\i_1+\dots+i_n\le N}}b_I\bigg(\prod_{k=1}^{n-1} \big(c_k^{i_k}t_n^{i_k}+\sum_{j=0}^{i_k-1}u_{k,j}t_n^j\big)\bigg) t_n^{i_n}\\
	&=\sum_{\substack{I=(i_1,\dots,i_n)\in \mathbb{N}^n\\i_1+\dots+i_n= N}}\big(b_I\prod_{k=1}^{n-1} c_k^{i_k})t_n^{N}+\sum_{j=0}^{N-1}v_jt_n^j, \text{ where } v_j\in F[t_1,\dots,t_{n-1}] \\
	&=h(c_1,\dots,c_{n-1})t_n^{N}+\sum_{j=0}^{N-1}v_jt_n^j,
\end{align*}
where $$h=\sum_{\substack{I=(i_1,\dots,i_n)\in \mathbb{N}^n\\i_1+\dots+i_n= N}}b_I y_1^{i_1}\dots y_{{n-1}}^{i_{n-1}}$$ is a polynomial with left coefficients in $D$, which is already in the standard form.

By applying Corollary \ref{ComNulstellensatz2}, we can choose specific elements $c_1,\dots, c_{n-1}\in F$ such that $h(c_1,\dots,c_{n-1})=a\ne0.$
Thus, we obtain that 
$$0=a^{-1}f(x_1,\dots,x_n)=t_n^{N}+\sum_{j=0}^{N-1}a^{-1}v_jt_n^j$$
Note that $a^{-1}v_j\in D[t_1,\dots,t_{n-1}]$, the subring of $S$ generated over $D$ by the elements $t_1,\dots,t_{n-1}$. 
Therefore, $t_n$ is left integral over $D[t_1,\dots,t_{n-1}]$. As a consequence, $S=D[t_1,\dots,t_{n-1}][t_n]$ is finitely generated as a left module over $D[t_1,\dots,t_{n-1}]$.

On the other hand, by Lemma \ref{mixderivation}, for each $k=1,\dots,n-1$, we have that $t_k=x_k-c_kx_n$ is automorphic over $D$ with respect to $(\omega,\delta_k-c_k\delta_n)$. Hence, $$D[t_1,\dots,t_{n-1}]=D[t_1,\dots,t_{n-1}; (\omega,\delta_1-c_1\delta_n),\dots,(\omega,\delta_{n-1}-c_{n-1}\delta_n)].$$

Now, by applying the induction hypothesis, we obtain that $D[t_1,\dots,t_{n-1}; (\omega,\delta_1-c_1\delta_n),\dots,(\omega,\delta_{n-1}-c_{n-1}\delta_n)]$ is skew normalizable over $D$. Thus, $S$ is skew   normalizable over $D$ due to Lemma \ref{transivity}.
\end{proof}
}

\begin{lemma}
    Let $\omega$ be an automorphism of a division ring $D$ and $\delta$ be an $\omega$-derivation of $D$. Let $F=Z(D)$ be the center of $D$. Suppose $\delta(F)\subseteq F$. Then $\omega(F)\subseteq F$ and $\delta$ is an $\omega$-derivation of $F$.
\end{lemma}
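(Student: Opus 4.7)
The statement splits naturally into two claims: (a) $\omega(F)\subseteq F$, and (b) the restriction $\delta|_F$ is an $\omega|_F$-derivation on $F$. The plan is to handle (a) first using that $\omega$ is an automorphism, and then derive (b) essentially for free from the Leibniz rule inherited from $D$.

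For (a), the key observation is that $\omega$ is an \emph{automorphism} of $D$, hence surjective as a ring map. Given $c\in F=Z(D)$ and an arbitrary $r\in D$, I would write $r=\omega(s)$ for some $s\in D$ and compute
\[
\omega(c)\,r \;=\; \omega(c)\omega(s) \;=\; \omega(cs) \;=\; \omega(sc) \;=\; \omega(s)\omega(c) \;=\; r\,\omega(c),
\]
using centrality of $c$ in $D$ at the middle step. Hence $\omega(c)$ commutes with every element of $D$, so $\omega(c)\in F$. Notice this argument does not invoke the hypothesis $\delta(F)\subseteq F$ at all; it uses only that $\omega$ is a ring automorphism. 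Applying the same argument to $\omega^{-1}$ yields $\omega^{-1}(F)\subseteq F$, so in fact $\omega$ restricts to an automorphism of $F$, not merely an endomorphism.

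For (b), the hypothesis $\delta(F)\subseteq F$ ensures that $\delta|_F$ is a well-defined additive map $F\to F$, and by (a) so is $\omega|_F$. The Leibniz identity
\[
\delta(ab)=\omega(a)\delta(b)+\delta(a)b
\]
already holds in $D$ for all $a,b\in D$; specialising to $a,b\in F$ and noting that each factor on the right lies in $F$ (since $\omega(a)\in F$ by (a), and $\delta(a),\delta(b)\in F$ by hypothesis), the rule holds \emph{inside} $F$. Thus $\delta|_F$ is an $\omega|_F$-derivation on $F$.

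I do not foresee a genuine obstacle: the proof is an exercise in tracking definitions once the surjectivity trick for $\omega$ is in hand. The only mildly subtle point is noticing that (a) is free from the derivation hypothesis while (b) genuinely needs $\delta(F)\subseteq F$ just to produce a map $F\to F$; after that, both conclusions follow by a few lines of bookkeeping.
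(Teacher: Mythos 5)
Your proof is correct and follows essentially the same route as the paper: both establish $\omega(F)\subseteq F$ by using surjectivity of $\omega$ to show $\omega(c)$ commutes with every element of $D$, and then observe that the Leibniz rule restricts to $F$. Your write-up is merely more explicit about why the second claim is immediate once the first is known.
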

\begin{proof}
    It is sufficient to prove $\omega(F)\subseteq F$. Fix $u\in F$. For all $d\in D$, we have $\omega(ud)=\omega(du)$, so $\omega(u)\omega(d)=\omega(d)\omega(u)$. Thus, $\omega(u)\in F$.% We also have $\delta(ud)=\delta(u)d+u\delta(d)=\delta(du)=\delta(d)u+d\delta(u)$, so $\delta(u)d=d\delta(u)$. Hence $\delta(u)\in F$.
\end{proof}

\begin{proposition}
    Let $D$ be a centrally finite division ring in its center $F=Z(D)$. Given commuting automorphisms $\omega_1,\dots,\omega_n$ in $\mathrm{Aut}(D)$ and $\omega_i$-derivation $\delta_i$ for $i=1,\dots,n$. Suppose that $\delta_i(F)\subseteq F$ for all $i=1,\dots,n$, and that the tuple $(\omega_1,\delta_1;\dots;\omega_n,\delta_n)$ is skew normalizable over $F$. If $S$ is skew finitely generated over $D$ with respect to $(\omega_1,\delta_1),\dots,(\omega_n,\delta_n)$, then $S$ is skew normalizable over $F$.
    %Then, if the tube $\left( (\omega_{1|_F},\delta_{1|_F}),\dots,(\omega_{n|_F},\delta_{n|_F})\right)$ is automorphically normalizable over $F$, then  the tuple $\left( (\omega_{1},\delta_{1}),\dots,(\omega_{n},\delta_{n})\right)$ is automorphically normalizable over $D$.
\end{proposition}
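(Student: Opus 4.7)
The plan is to reduce the problem to the automorphic normalizability of a suitable subring of $S$ over $F$ and then apply the transitivity lemma. Since $S$ is automorphically finitely generated over $D$, we may write $S = D[s_1,\dots,s_n]$ with $s_1,\dots,s_n$ commuting elements automorphic over $D$ with respect to $(\omega_1,\delta_1),\dots,(\omega_n,\delta_n)$. Let $T := F[s_1,\dots,s_n] \subseteq S$. By the preceding lemma, each $\omega_i$ restricts to an automorphism of $F$ and each $\delta_i$ restricts to an $\omega_i|_F$-derivation of $F$; the defining relation $s_i f = \omega_i(f) s_i + \delta_i(f)$ for $f \in F$ then shows that $s_1,\dots,s_n$ are commuting automorphic elements of $T$ over $F$ with respect to the restricted tuple. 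Hence $T$ is automorphically finitely generated over $F$, and the hypothesis that $(\omega_1,\delta_1;\dots;\omega_n,\delta_n)$ is automorphically normalizable over $F$ immediately yields that $T$ is automorphically normalizable over $F$.

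The main technical step is to show that $S$ is a finite left module over $T$. Since $D$ is centrally finite, fix a left $F$-basis $d_1,\dots,d_k$ of $D$ and set $M := Td_1 + \cdots + Td_k \subseteq S$. I will argue that $M = S$ by showing $M$ is a subring of $S$ containing $D$ and each $s_i$. The only nontrivial verification is that $d_j \cdot T \subseteq M$ for every $j$; this uses the commutation rule $d_j s_i = s_i \omega_i^{-1}(d_j) - \delta_i(\omega_i^{-1}(d_j))$, expanding $\omega_i^{-1}(d_j)$ and $\delta_i(\omega_i^{-1}(d_j))$ in the basis $\{d_l\}$, together with the hypotheses $\omega_i(F), \delta_i(F) \subseteq F$, which guarantee that the scalars produced by commuting $s_i$ past the $F$-coefficients stay in $T$. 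Formalising this as the claim that $N := \{t \in T : d_j t \in M \text{ for all } j\}$ is a subring of $T$ containing $F$ and each $s_i$, we conclude $N = T$ and hence $M = S$.

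Finally, the situation $F \subseteq T \subseteq S$ with $T$ automorphically normalizable over $F$ and $S$ a finite left $T$-module fits exactly into Lemma \ref{transivity}, yielding that $S$ is automorphically normalizable over $F$. The main obstacle is the bookkeeping in the second paragraph: one needs both the centrally finite hypothesis (to re-expand images of basis elements in $\{d_l\}$) and the preservation $\omega_i(F), \delta_i(F) \subseteq F$ (so that pushing $s_i$ past an element of $F$ produces another element of $T$) in order to confine the rewriting process to $M$.
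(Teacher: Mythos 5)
Your proof is correct and follows essentially the same route as the paper: pass to $T=F[s_1,\dots,s_n]$, use the preceding lemma to see that the restricted tuple makes $T$ an automorphically finitely generated (hence, by hypothesis, automorphically normalizable) extension of $F$, observe that $S$ is a finite left $T$-module because $D$ is finite over $F$, and conclude with Lemma \ref{transivity}. The only difference is that you spell out the bookkeeping behind the finiteness claim $S=Td_1+\dots+Td_k$ (which the paper asserts in one line), and your verification there is sound.
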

\begin{proof}
Suppose $S=D[x_1,\dots,x_n;(\omega_1,\delta_1),(\omega_n,\delta_n)]$ is a skew finitely  generated extension over $D$, where $x_i$ is automorphic over $D$ with respect to $(\omega_i,\delta_i)$. We prove that $S$ is skew normalizable over $F$.

From the assumption that $D$ is a finite module over $F$, we have that $S$ is a finite module over $F[x_1,\dots,x_n;(\omega_1,\delta_1),\dots(\omega_n,\delta_n)]$. Since the tube $(\omega_1,\delta_1;\dots;\omega_n,\delta_n)$ is skew normalizable over $F$, there exist automorphisms $\alpha_1,\dots,\alpha_s$ of $F$ and $\alpha_i$-derivation $\beta_i$ over $F$ for $i={1,..,s}$, and there are left algebraically independent elements over $F$ $$y_1,\dots,y_s\in F[x_1,\dots,x_n;(\omega_1,\delta_1),\dots(\omega_n,\delta_n)]$$ such that $F[x_1,\dots,x_n;(\omega_1,\delta_1),\dots(\omega_n,\delta_n)]$ is a finite module over $F[y_1,\dots,y_s;$ $(\alpha_1,\beta_1),\dots,(\alpha_s,\beta_s)]$. Hence, $S$ is a finite module over $F[y_1,\dots,y_s;(\alpha_1,\beta_1),\dots,$ $(\alpha_s,\beta_s)]$. By Lemma \ref{transivity}, $S$ is skew normalizable over $F$.
\end{proof}

\section*{Acknowledgements}
We would like to express our sincere gratitude for the time and effort that the referee has generously devoted to the thorough and meticulous review of our manuscript. His/her suggestions have greatly helped improve the presentation and the style of the paper. This research was funded by Ho Chi Minh city University of Technology and Engineering, Vietnam, under grant No. T2026-147.

%{\color{blue} One next question: When $\left( (\omega_{1},\delta_{1}),\dots,(\omega_{n},\delta_{n})\right)$ or $\left( (id,\delta_{1}),\dots,(id,\delta_{n})\right)$  are normalizable over a field $F$?}

%\begin{proposition}
%    Generalize Prop 5.2 in Thieu ??
%\end{proposition}

%\def\cprime{$'$}
%\providecommand{\bysame}{\leavevmode\hbox to3em{\hrulefill}\thinspace}
%\bibliography{Bibfile_hs2_2019}
\bibliographystyle{amsplain}

\end{document}